\numberwithin{equation}{section}
\newtheorem{thm}{Theorem}[section]
\newtheorem{prop}[thm]{Proposition}
\newtheorem{cor}[thm]{Corollary}
\newtheorem*{cor*}{Corollary}
\newtheorem{lema}[thm]{Lemma}
\newtheorem*{lema*}{Lemma}
\theoremstyle{definition}
\newtheorem{prob}{Problem}
\newtheorem*{prob*}{Problem}
\newtheorem*{Def}{Definition}
\newtheorem{obs}[thm]{Remark}
\newtheorem*{obs*}{Remark}
\newtheorem*{thm*}{Theorem}
\newtheorem*{prop*}{Proposition}
\newcommand{\PI}[2]{\left\langle \,#1 , #2\, \right\rangle}
\newcommand{\K}[2]{[ \,#1 , #2\, ]}
\newcommand{\ra}{\rightarrow}
\newcommand{\x}{\times}
\newcommand{\CC}{\mathbb{C}}
\newcommand{\RR}{\mathbb{R}}
\newcommand{\NN}{\mathbb{N}}
\newcommand{\St}{\mathcal{S}}
\newcommand{\N}{\mathcal{N}}
\newcommand{\M}{\mathcal{M}}
\newcommand{\HH}{\mathcal{H}}
\newcommand{\KK}{\mathcal{K}}
\newcommand{\EE}{\mathcal{E}}
\newcommand{\mc}[1]{\mathcal{#1}}
\newcommand{\parentesis}[1]{\left( \,#1\, \right)}
\newcommand{\set}[1]{\left\{ \,#1\, \right\}}
\newcommand{\ol}{\overline}
\newcommand{\ort}{[\bot]}
\newcommand{\noi}{\noindent}
\newcommand{\sdo}{[\dotplus]}
\newcommand{\CV}{\mathcal{C}_{V}}
\newcommand{\PK}{\mathcal{P}^+(\KK)}
\begin{document}

\title[Regularization of an Abstract Interpolation Problem]{Regularization of an Indefinite Abstract Interpolation Problem with a Quadratic Constraint}

\author[S.~Gonzalez Zerbo]{Santiago Gonzalez Zerbo} 
\address{Departamento de Matem\'{a}tica, Facultad de Ingenier\'{\i}a -- Universidad de Buenos Aires, and Instituto Argentino de Matem\'{a}\-tica ``Alberto P. Calder\'{o}n'' (CONICET), Saavedra 15 (1083) Buenos Aires, Argentina}
\email{sgzerbo@fi.uba.ar}

\author[A. Maestripieri]{Alejandra Maestripieri} 
\address{Departamento de Matem\'{a}tica, Facultad de Ingenier\'{\i}a -- Universidad de Buenos Aires, and Instituto Argentino de Matem\'{a}\-tica ``Alberto P. Calder\'{o}n'' (CONICET), Saavedra 15 (1083) Buenos Aires, Argentina}
\email{amaestri@fi.uba.ar}

\author[F. Mart\'{\i}nez Per\'{\i}a]{Francisco Mart\'{\i}nez Per\'{\i}a}
\address{Centro de Matem\'{a}tica de La Plata (CeMaLP) -- FCE-UNLP, La Plata, Argentina \\
and Instituto Argentino de Matem\'{a}tica ``Alberto P. Calder\'{o}n'' (CONICET), Saavedra 15 (1083) Buenos Aires, Argentina}
\email{francisco@mate.unlp.edu.ar}

\begin{abstract}
Along this work we study an indefinite abstract smoothing problem. After establishing necessary and sufficient conditions for the existence of solutions to this problem, the set of admissible parameters
is discussed in detail. Then, its relationship with a linearly constrained interpolation problem is analyzed.
\end{abstract}

\keywords{abstract splines \and Krein spaces \and quadratically constrained quadratic programing}
\subjclass[2010]{Primary 46C20;  Secondary 47B50, 65D10}

\maketitle

\section{Introduction}
In the sixties, a Hilbert space formulation of spline functions, known as abstract splines, was introduced by M. Atteia \cite{[Att1]} and extended by several authors, see e.g.
\cite{[AL], [dB], [Lau], [S1]}. Abstract splines are the solution to an abstract interpolation problem, which has a natural generalization to Krein spaces. Spline functions in indefinite metric
spaces have already been studied in \cite{Canu learning,Loosli} to solve numerical aspects related to learning theory problems. Although the problems presented there are different from those studied in this work, they are closely related.

 Given a Hilbert space $\HH$, and Krein spaces $\KK$ and $\EE$, consider (bounded) surjective  operators $T : \HH \ra \KK$ and $V : \HH \ra \EE$. The abstract interpolation problem in Krein spaces
 can be stated as follows: given $z_0\in \EE$,
\begin{equation}\label{QCQP}
\textit{\normalfont{minimize} }\K{Tx}{Tx},\textit{ \normalfont{subject to} }\K{Vx-z_0}{Vx-z_0}=0,
\end{equation}
and if the minimum exists, find the set of arguments at which it is attained.

Since $\K{\cdot}{\cdot}_\KK$ and $\K{\cdot}{\cdot}_\EE$ are indefinite inner products, the above one is a quadratically constrained quadratic programming (QCQP) problem, where the objective
function $x\mapsto \K{Tx}{Tx}$ is not convex while the function defining
the equality constraint $x\mapsto \K{Vx-z_0}{Vx-z_0}$ is sign indefinite.  

Given $x_0\in\HH$ such that $Vx_0=z_0$, the condition $\K{Vx-z_0}{Vx-z_0}=0$ becomes $x\in x_0 + \CV$, 
where $\CV=\set{u\in\HH: \K{Vu}{Vu}=0}$ is the set of neutral elements of the quadratic form $x\mapsto \K{Vx}{Vx}$.
Hence, \eqref{QCQP} can be restated as: analyze the existence of 
\begin{equation*}
\min_{x\in x_0 + \CV} \K{Tx}{Tx}.
\end{equation*}
and if the minimum exists, find the set of arguments at which it is attained.

If $V^\#V$ is a positive (or negative) semidefinite operator in $\HH$ then $\CV$ coincides with $N(V)$,
and \eqref{QCQP} becomes the interpolation problem previously studied in \cite{GMMP10}.
But, if $V^\#V$ is \emph{indefinite}, the set $\CV$ is strictly larger than $N(V)$.
Therefore, in the main part of this work $V^\#V$ is assumed to be indefinite, i.e. neither
positive nor negative semidefinite. The particular case of a semidefinite $V^\#V$ is only considered in Section \ref{semidef}. 

One of the main drawbacks for tackling this QCQP problem is that $\CV$ is not a convex set. Moreover, the convex hull of $\CV$ is the complete Hilbert space $\HH$, thus replacing $\CV$ by its convex hull trivializes the problem.

\medskip 

The classical penalization approach known as the Tikhonov regularization can be applied to this problem in order to obtain an associated optimization problem over the Hilbert space $\HH$. Indeed, the aim of this work is to study the following
generalization of the abstract smoothing problem \cite{[Att4]}:

\begin{prob*}
Given $\rho \in\RR\setminus\{0\}$ and a fixed $z_0\in \EE$, analyze the existence of
\begin{equation*}
\min_{x\in\HH}\parentesis{\K{Tx}{Tx}_\KK+\rho\K{Vx-z_0}{Vx-z_0}_\EE},
\end{equation*}
and if the minimum exists, find the set of arguments at which it is attained.
\end{prob*}

The advantage of the above regularized problem is that it can be restated as an indefinite least-squares problem. These problems have been thoroughly studied before, both in
finite-dimensional spaces \cite{H1,H2,Sayed,CHGS98,HSK99,Patel,Bojanczyk} and in infinite dimensional Krein spaces \cite{Bognar,GMMP10b,GMMP16}. Although the existence of solutions to Problem
\ref{pb smooth} as well as descriptions of the set of solutions can be derived from the corresponding indefinite least-squares problem, it is desirable to reformulate these conditions in terms of
the original data $T$, $V$, $\rho$ and $z_0$.

The indefinite abstract smoothing problem in Krein spaces was initially studied in \cite{GMMP10}, but for a linear constraint. Also, in \cite{Canu splines} another version of this abstract smoothing problem was studied, but instead of minimizing $F_\rho (x)=\K{Tx}{Tx}_\KK + \rho \|Vx-z_0\|^2_\EE$, the authors were interested in stabilizing (i.e. finding the critical points of) this function. 

\medskip

The contents of the paper are organized as follows. Section 2 explains the notations, and it also contains a compilation of the basic terminology and results on Krein spaces  used along this work. 

Section 3 deals with the indefinite abstract smoothing problem. After establishing necessary and sufficient conditions for the existence of solutions to this problem, the set of admissible parameters
is discussed in detail.

Finally, Section 4 is devoted to analyzing the close relationship between the indefinite abstract smoothing problem and a particular version of the indefinite abstract interpolation problem, where the
quadratic constraint in \eqref{QCQP} is replaced by a linear constraint.
This problem coincides with the one previously
considered in \cite{GMMP10}. We focus our attention on the situations in which a set of interpolating splines is also a subset of the solutions to a certain smoothing interpolation problem, or, even
further, in which an indefinite interpolation problem can be posed as an indefinite smoothing problem, and viceversa.

\section{Preliminaries}

Along this work $\HH$ denotes a complex (separable) Hilbert space. If $\mc{K}$ is another Hilbert space then $\mc{L}(\HH, \KK)$ is the vector space of bounded linear operators from $\HH$ into $\KK$ and
$\mc{L}(\HH)=\mc{L}(\HH,\HH)$ stands for the algebra of bounded linear operators in $\HH$.

If $T\in \mc{L}(\HH, \KK)$ then $R(T)$ stands for the range of $T$ and $N(T)$ for its nullspace. The Moore-Penrose inverse $T^\dag$ of an operator $T\in \mathcal{L}(\HH,\KK)$ is the densely defined (not necessarily bounded) linear operator 
\[
T^\dag: R(T) \dotplus R(T)^\bot \ra \HH,
\]
defined as follows: for $y\in R(T)$ and $z\in R(T)^\bot$, $T^\dag(y+z)=x$ where $x\in N(T)^\bot$ is the (unique) vector that satisfies $Tx=y$.

Recall that the Moore-Penrose inverse $T^\dag$ is bounded if and only if $T$ has a closed range. 
For detailed expositions on the Moore-Penrose inverse, see \cite{BG,Nashed}.

The following well-known result about the product of closed range operators \cite{Bouldin, Izumino} is frequently used along the paper.

\begin{prop}\label{R(AB)}
Given Hilbert spaces $\HH_1$, $\HH_2$ and $\KK$, let $A\in \mc{L}(\KK,\HH_2)$ and $B\in \mc{L}(\HH_1,\KK)$ have closed ranges. Then, $AB\in \mc{L}(\HH_1,\HH_2)$ has closed range if and only if $R(B) + N(A)$ is closed in $\KK$. 
\end{prop}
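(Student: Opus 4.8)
The plan is to reduce the statement to a question about the image of a \emph{closed} subspace under the closed-range operator $A$, and then to the classical characterization of when the sum of a subspace and a closed subspace is closed.

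First I would observe that $R(AB) = A(R(B))$ and that, since $B$ has closed range, $M := R(B)$ is a closed subspace of $\KK$. Thus the whole problem becomes: characterize when $A(M)$ is closed for a closed subspace $M \subseteq \KK$. Writing $P$ for the orthogonal projection onto $N(A)^\bot$ and using $Ax = APx$, we get $A(M) = A(P(M))$, where now $P(M) \subseteq N(A)^\bot$.

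The key step is to exploit that $A$ has closed range, which is equivalent to $A$ being bounded below on $N(A)^\bot$. Hence the restriction $A|_{N(A)^\bot} : N(A)^\bot \ra R(A)$ is an injective topological isomorphism onto the closed subspace $R(A)$. Being a homeomorphism, it carries closed subsets of $N(A)^\bot$ to closed subsets of $R(A)$ — and these are closed in $\HH_2$, since $R(A)$ is closed — and, conversely, pulls back closed sets to closed sets. Applying this to $P(M)$ shows that $A(M) = A(P(M))$ is closed in $\HH_2$ \emph{if and only if} $P(M)$ is closed in $N(A)^\bot$.

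It remains to relate the closedness of $P(M)$ to that of $R(B) + N(A) = M + N(A)$. Here I would use the elementary identity $M + N(A) = P(M) + N(A)$, which holds because every $m\in M$ decomposes as $m = Pm + (I-P)m$ with $(I-P)m \in N(A)$, together with the orthogonality $P(M) \subseteq N(A)^\bot$. Since the two summands are orthogonal and $N(A)$ is closed, the sum $P(M) \dotplus N(A)$ is closed precisely when $P(M)$ is closed; the nontrivial direction follows from the identity $P(M) = (M + N(A)) \cap N(A)^\bot$, which exhibits $P(M)$ as an intersection of closed sets. Chaining the three equivalences gives the claim. The main obstacle is the middle step: making precise that ``closed range'' translates into the bounded-below/homeomorphism property, and that this is exactly what lets one transfer closedness across $A$; the remaining steps are orthogonal-decomposition bookkeeping.
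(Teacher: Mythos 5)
Your proof is correct. Note that the paper does not actually prove this proposition: it is quoted as a known fact with references to Bouldin and to Izumino, so there is no internal argument to compare against. Your argument is essentially the standard one from that literature: reduce to $R(AB)=A(R(B))$, use that closedness of $R(A)$ is equivalent to $A$ being bounded below on $N(A)^\bot$ so that $A|_{N(A)^\bot}$ is a homeomorphism onto the closed subspace $R(A)$, and then translate everything through the orthogonal decomposition $\KK=N(A)\oplus N(A)^\bot$. All the individual steps check out, including the two directions of the last equivalence (an orthogonal sum of closed subspaces is closed, and $P(M)=\bigl(M+N(A)\bigr)\cap N(A)^\bot$ recovers $P(M)$ as an intersection of closed sets). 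One small observation: your argument never uses that $R(B)$ is closed --- it shows more generally that, for $A$ with closed range and an arbitrary subspace $\M\subseteq\KK$, the image $A(\M)$ is closed if and only if $\M+N(A)$ is closed --- which is harmless here but worth being aware of when you state what you have actually proved.
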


\subsection{Krein spaces}

In what follows we present the standard notation and some basic results on Krein spaces. For a complete exposition on the subject (and the proofs of the results below) see
\cite{Bognar,Azizov,Ando,Dritschel 1,Rovnyak}.

\medskip

An indefinite inner product space $(\mc{F}, \K{\cdot}{\cdot})$ is a (complex) vector space $\mc{F}$ endowed with a Hermitian sesquilinear form $\K{\cdot}{\cdot}: \mc{F}\x\mc{F} \ra \CC$.

A vector $x\in\mc{F}$ is {\it positive}, {\it negative}, or {\it neutral} if $\K{x}{x}>0$, $\K{x}{x}<0$, or $\K{x}{x}=0$, respectively.
The set of positive vectors in $\mc{F}$ is denoted by $\mathcal{P}^{++}(\mc{F})$, and the set of {\it nonnegative} vectors in $\mc{F}$
by $\mathcal{P}^+(\mc{F})$. The sets of negative, nonpositive and neutral vectors in $\mc{F}$ are defined analogously, and they are denoted by $\mathcal{P}^{--}(\mc{F})$, $\mathcal{P}^{-}(\mc{F})$, and $\mathcal{P}^0(\mc{F})$, respectively.

Likewise, a subspace $\M$ of $\mc{F}$ is {\it positive} if every $x\in\M$, $x\neq0$ is a positive vector in $\mc{F}$; and it is
{\it nonnegative} if $\K{x}{x} \geq0$ for every $x\in\M$. Negative, nonpositive and neutral subspaces are defined mutatis mutandis.

If $\St$ is a subset of an indefinite inner product space $\mc{F}$, the \emph{orthogonal companion} to $\St$ is defined by 
\[
\St^{\ort}=\set{ x\in\mc{F} : \K{x}{s}=0 \; \text{for every $s\in\St$}}.
\]
It is easy to see that $\St^{\ort}$ is always a subspace of $\mc{F}$.

\begin{Def}
An indefinite inner product space $(\HH, \K{\cdot}{\cdot})$ is a \emph{Krein space} if it can be decomposed as a direct (orthogonal) sum of a Hilbert space and an anti Hilbert space, i.e. there
exist subspaces $\HH_\pm$ of $\HH$ such that $(\HH_+, \K{\cdot}{\cdot})$ and $(\HH_-, -\K{\cdot}{\cdot})$ are Hilbert spaces,
\begin{equation}\label{desc cano}
\HH=\HH_+ \dotplus \HH_-,
\end{equation}
and $\HH_+$ is orthogonal to $\HH_-$ with respect to the indefinite inner product. Sometimes we use the notation $\K{\cdot}{\cdot}_\HH$ instead of $\K{\cdot}{\cdot}$ to emphasize the Krein space considered.
\end{Def}

A pair of subspaces $\HH_\pm$ as in \eqref{desc cano} is called a \emph{fundamental decomposition} of $\HH$. Given a Krein space $\HH$ and a fundamental decomposition
$\HH=\HH_+\dotplus \HH_-$, the direct (orthogonal) sum of the Hilbert spaces $(\HH_+, \K{\cdot}{\cdot})$ and $(\HH_-, -\K{\cdot}{\cdot})$ is denoted
by $(\HH,\PI{\cdot}{\cdot})$.

If $\HH=\HH_+ \dotplus \HH_-$ and $\HH=\HH'_+ \dotplus \HH'_-$ are two different fundamental decompositions of $\HH$, the corresponding associated inner products $\PI{\cdot}{\cdot}$ and
$\PI{\cdot}{\cdot}'$ turn out to be equivalent on $\HH$. Therefore, the norm topology on $\HH$ does not depend on the chosen fundamental decomposition.

\medskip

If $(\HH,\K{\cdot}{\cdot}_\HH)$ and $(\KK,\K{\cdot}{\cdot}_\KK)$ are Krein spaces, $\mc{L}(\HH, \KK)$ stands for the vector space of linear transformations which are
bounded with respect to any of the associated Hilbert spaces $(\HH,\PI{\cdot}{\cdot}_\HH)$ and $(\KK,\PI{\cdot}{\cdot}_\KK)$. 
Given $T\in \mathcal{L}(\HH,\KK)$, the adjoint operator of $T$ (in the Krein spaces sense) is the unique operator $T^\#\in \mc{L}(\KK, \HH)$ such that
\[
\K{Tx}{y}_\KK=\K{x}{T^\#y}_\HH, \ \ \ \ x\in\HH ,y\in\KK.
\]

We will frequently use that if $T\in \mc{L}(\HH,\KK)$ and $\M$ is a closed subspace of $\KK$ then 
\begin{equation}\label{preimag}
T^\#(\M)^{\ort_\HH}= T^{-1}(\M^{\ort_\KK}).
\end{equation}

Given a subspace $\M$ of a Krein space $\HH$, the {\it isotropic part} of $\M$ is defined by $\M^\circ:=\M\cap\M^{\ort}$. Then, $\M$ is {\it nondegenerate} if $\M^\circ=\set{0}$.

A subspace $\M$ of a Krein space $\HH$ is {\it pseudo-regular} if $\M+\M^{\ort}$ is a closed subspace of $\HH$, and it is {\it regular} if $\M+\M^{\ort}=\HH$. Regular subspaces are examples of
nondegenerate subspaces, but pseudo-regular subspaces can be degenerate ones. However, if $\M$ is a pseudo-regular subspace then there exists a regular subspace $\mc{R}$ such that 
\begin{equation}\label{desc}
\M=\M^\circ \sdo\ \mc{R},
\end{equation} 
where $\sdo$ stands for the direct orthogonal sum with respect to the indefinite inner product $\K{\cdot}{\cdot}$.
In fact, any closed subspace $\mc{R}$ such that $\M=\M^\circ \dotplus \mc{R}$ satisfies \eqref{desc} and
turns out to be a regular subspace of $\HH$, see e.g. \cite{G84}. 
Note that a subspace $\M$ is regular if and only if it is pseudo-regular and nondegenerate.

The following propositions can be found in \cite[Lemma 3.4]{GMMP16} and \cite[Chapter 1, \S 7]{Azizov}, respectively.

\begin{prop}\label{preeliminares pseudo regular}
Given Krein spaces $\HH$ and $\KK$, let $T\in \mc{L}(\HH,\KK)$ with closed range. Then, $R(T)$ is pseudo-regular if and only if $R(T^\#T)$ is closed. 
\end{prop}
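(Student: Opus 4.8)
The plan is to recognize $R(T^\#T)$ as the range of a product of two closed-range operators and then invoke Proposition \ref{R(AB)}. Concretely, I would take $A=T^\#$ and $B=T$, so that $AB=T^\#T$ and the criterion of Proposition \ref{R(AB)} reads: $R(T^\#T)$ is closed if and only if $R(T)+N(T^\#)$ is closed in $\KK$. Since $R(T)$ is assumed closed, the argument then hinges on two identifications: (i) $T^\#$ has closed range, so that Proposition \ref{R(AB)} actually applies; and (ii) $N(T^\#)=R(T)^{\ort}$, so that the closedness of $R(T)+N(T^\#)$ becomes exactly the closedness of $R(T)+R(T)^{\ort}$, which is the definition of $R(T)$ being pseudo-regular.

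For (i), I would pass to an associated Hilbert space structure. Fixing fundamental symmetries $J_\HH$ and $J_\KK$, one has $T^\#=J_\HH\, T^*\, J_\KK$, where $T^*$ denotes the adjoint with respect to the associated Hilbert inner products $\PI{\cdot}{\cdot}_\HH$ and $\PI{\cdot}{\cdot}_\KK$. Because $J_\HH$ and $J_\KK$ are boundedly invertible and closed range is preserved under composition with boundedly invertible operators, $R(T^\#)$ is closed if and only if $R(T^*)$ is closed; and by the closed range theorem in Hilbert spaces the latter is equivalent to $R(T)$ being closed, which holds by hypothesis. Hence $T^\#$ has closed range and Proposition \ref{R(AB)} is available. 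Here I use that the norm topology, and therefore the notion of closed range, does not depend on the chosen fundamental decomposition.

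For (ii), I would argue directly from the definition of the adjoint. A vector $y\in\KK$ lies in $R(T)^{\ort}$ exactly when $\K{Tx}{y}_\KK=0$ for all $x\in\HH$; using $\K{Tx}{y}_\KK=\K{x}{T^\#y}_\HH$, this says $\K{x}{T^\#y}_\HH=0$ for every $x\in\HH$, and since the inner product of a Krein space is nondegenerate this forces $T^\#y=0$. The reverse inclusion is immediate, so $N(T^\#)=R(T)^{\ort}$. Combining (i) and (ii) with Proposition \ref{R(AB)} gives the chain $R(T^\#T)$ closed $\iff$ $R(T)+N(T^\#)=R(T)+R(T)^{\ort}$ closed $\iff$ $R(T)$ pseudo-regular, which is the claim.

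I expect the only genuinely delicate point to be (i): one must be sure that \textquotedblleft closed range\textquotedblright{} is the same notion for $T$ and for $T^\#$ despite the indefinite metric. This is why the reduction to the Hilbert-space adjoint through the fundamental symmetries, together with the topology-independence of the fundamental decomposition, is the crucial step; everything else is a bookkeeping application of Proposition \ref{R(AB)} and of the nondegeneracy of the Krein inner product.
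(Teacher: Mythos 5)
Your argument is correct and complete. Note, however, that the paper does not prove this proposition at all: it is quoted verbatim from \cite[Lemma 3.4]{GMMP16}, so there is no in-paper proof to compare against. On its own merits, your proof holds up: the two points you flag as needing care are exactly the right ones, and you settle both. For (i), the factorization $T^\#=J_\HH T^* J_\KK$ through fundamental symmetries, together with the closed range theorem and the independence of the norm topology from the chosen fundamental decomposition, correctly gives that $R(T^\#)$ is closed, so Proposition \ref{R(AB)} applies to the pair $A=T^\#$, $B=T$. For (ii), the identity $N(T^\#)=R(T)^{\ort}$ follows as you say from the defining property of the adjoint and the nondegeneracy of the Krein inner product on $\HH$. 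Since $R(T)$ is closed by hypothesis, the closedness of $R(T)+N(T^\#)=R(T)+R(T)^{\ort}$ is precisely the pseudo-regularity of $R(T)$, and the chain of equivalences closes. This is very likely the same mechanism used in the cited reference, as the Bouldin--Izumino criterion (Proposition \ref{R(AB)}) is the standard tool for such statements.
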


A subspace $\M$ of a Krein space $(\HH,\K{\cdot}{\cdot})$ is {\it uniformly positive} if 
there exists $\alpha>0$ such that 
\[
\K{x}{x}\geq\alpha\|x\|^2\quad \text{ for every $x\in\M$},
\]
where $\|\cdot\|$ is the norm of any associated Hilbert space. Uniformly negative subspaces are defined mutatis mutandis.

\begin{prop}\label{preeliminares regular}
Let $\M$ be a subspace of a Krein space $\HH$. Then, $\M$ is closed and uniformly positive (resp. negative) if and only if $\M$ is regular and nonegative (resp. nonpositive).
\end{prop}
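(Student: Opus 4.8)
The plan is to reduce the whole equivalence to a spectral condition on the \emph{Gram operator} of $\M$. I fix a fundamental decomposition $\HH=\HH_+\dotplus\HH_-$ with associated Hilbert inner product $\PI{\cdot}{\cdot}$ and fundamental symmetry $J=P_{\HH_+}-P_{\HH_-}$, so that $J=J^*=J^{-1}$ and $\K{x}{y}=\PI{Jx}{y}$ for all $x,y\in\HH$. An elementary observation used throughout is that $\M^{\ort}=J(\M^{\bot})$; in particular $\M^{\ort}$ is always closed and $(\M^{\ort})^{\ort}=\ol{\M}$. Assuming momentarily that $\M$ is closed, I let $P_\M$ denote the $\PI{\cdot}{\cdot}$-orthogonal projection onto $\M$ and set $G:=P_\M J|_{\M}\in\mc{L}(\M)$. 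Then $G$ is selfadjoint on the Hilbert space $(\M,\PI{\cdot}{\cdot})$ and $\PI{Gx}{y}=\K{x}{y}$ for $x,y\in\M$. Consequently $\M$ is nonnegative iff $G\geq0$, and $\M$ is uniformly positive iff $G\geq\alpha I$ for some $\alpha>0$; since a nonnegative selfadjoint operator satisfies $G\geq\alpha I$ precisely when $0\notin\sigma(G)$, uniform positivity is equivalent to the bounded invertibility of $G$.

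The key step is to identify regularity with surjectivity of $G$. For $x\in\HH$ and $m\in\M$, the inclusion $x-m\in\M^{\ort}=J(\M^{\bot})$ holds iff $P_\M J(x-m)=0$, i.e. iff $P_\M Jx=Gm$. Hence $\M+\M^{\ort}=\HH$ holds iff $P_\M Jx\in R(G)$ for every $x\in\HH$; and since $J$ is bijective, $\{P_\M Jx:x\in\HH\}=P_\M\HH=\M$. Therefore $\M$ is regular iff $R(G)=\M$, that is, iff $G$ is surjective --- which for the selfadjoint $G$ forces injectivity as well, as $N(G)=R(G)^{\bot}$ in $\M$. So regularity amounts to bijectivity, equivalently (by the open mapping theorem) to bounded invertibility of $G$.

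With these translations both implications follow at once. If $\M$ is closed and uniformly positive then $G\geq\alpha I$, so $G\geq0$ and $G$ is invertible, whence $\M$ is nonnegative and regular. Conversely, suppose $\M$ is regular and nonnegative. I first check that $\M$ is automatically closed: from $\M+\M^{\ort}=\HH$ one gets $\M^{\ort}+(\M^{\ort})^{\ort}=\M^{\ort}+\ol{\M}=\HH$, so $\M^{\ort}$ is itself regular and hence nondegenerate; then any $x\in\ol{\M}=(\M^{\ort})^{\ort}$, written as $x=m+n$ with $m\in\M$ and $n\in\M^{\ort}$, satisfies $n=x-m\in(\M^{\ort})^{\ort}\cap\M^{\ort}=(\M^{\ort})^{\circ}=\{0\}$, so $x=m\in\M$. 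Now $G$ is well defined, $G\geq0$ by nonnegativity and $G$ is bijective by regularity, and a nonnegative bijective selfadjoint operator is bounded below; this is exactly uniform positivity. The negative (resp. nonpositive) case follows by applying the positive case to the Krein space $(\HH,-\K{\cdot}{\cdot})$.

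The main obstacle is precisely the bookkeeping that makes the Gram operator available and faithful: establishing $\M^{\ort}=J(\M^{\bot})$ and $(\M^{\ort})^{\ort}=\ol{\M}$, upgrading a merely regular subspace to a closed one, and recognizing that the decomposition property $\M+\M^{\ort}=\HH$ is exactly surjectivity of $G$. Once this dictionary is set up, uniform positivity is just the elementary spectral fact that a nonnegative, boundedly invertible selfadjoint operator is bounded below.
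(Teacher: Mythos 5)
Your proof is correct. Note that the paper does not actually prove this proposition---it cites it directly from Azizov--Iokhvidov (Chapter 1, \S 7)---and your argument via the Gram operator $G=P_\M J|_\M$ (translating nonnegativity into $G\geq 0$, uniform positivity into bounded invertibility, and regularity into surjectivity of $G$) is precisely the standard proof found in that reference, including the preliminary step showing that a regular nonnegative subspace is automatically closed.
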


The following result, taken from \cite[Chapter 1, \S 1]{Azizov}, deals with indefinite inner product spaces which are not necessarily Krein spaces. It will be one of the main tools to study the problem posed in \eqref{QCQP}.

\begin{lema}\label{mapeo}
Suppose that $(\mc{F}_1,\K{\,}{\,}_1)$ is an indefinite inner product space and $(\mc{F}_2,\K{\,}{\,}_2)$ is an arbitrary inner product space. If the linear mapping $T:\mc{F}_1\ra \mc{F}_2$ satisfies
that $T(\mc{P}_1^0)\subset \mc{P}_2^+$ then, for every $y\in \mc{P}_1^{--}$ and $z\in\mc{P}_1^{++}$,
\begin{equation*}
	\frac{\K{Ty}{Ty}_2}{\K{y}{y}_1}\leq \frac{\K{Tz}{Tz}_2}{\K{z}{z}_1}.
\end{equation*}
Moreover, under these conditions,
\[
\mu_+(T):=\inf_{z\in \mc{P}_1^{++}} \frac{\K{Tz}{Tz}_2}{\K{z}{z}_1} \ \ \ \text{and} \ \ \ \mu_-(T):=\sup_{y\in \mc{P}_1^{--}} \frac{\K{Ty}{Ty}_2}{\K{y}{y}_1}
\]
exist, $\mu_-(T)\leq \mu_+(T)$ and, for any $\mu\in [\mu_-(T), \mu_+(T)]$ the following inequality holds
\begin{equation*}
\K{Tx}{Tx}_2\geq \mu \K{x}{x}_1 \ \ \ \text{for every $x\in\mc{F}_1$}.
\end{equation*}
\end{lema}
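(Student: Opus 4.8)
The plan is to reduce everything to the first displayed inequality, namely the comparison
\[
\frac{\K{Ty}{Ty}_2}{\K{y}{y}_1}\leq \frac{\K{Tz}{Tz}_2}{\K{z}{z}_1}
\]
for a fixed pair $y\in\mc{P}_1^{--}$, $z\in\mc{P}_1^{++}$; once this is available, the existence of $\mu_\pm(T)$, the ordering $\mu_-(T)\leq\mu_+(T)$, and the global inequality all follow by elementary supremum/infimum arguments. First I would note that both quotients are invariant under (nonzero) scaling of $y$ and $z$, and set $a:=\K{z}{z}_1>0$ and $b:=\K{y}{y}_1<0$. The idea is to probe the hypothesis $T(\mc{P}_1^0)\subset\mc{P}_2^+$ along the real pencil $w_t:=z+ty$, $t\in\RR$, whose self inner product $p(t):=\K{w_t}{w_t}_1=b\,t^2+2\,\real\K{z}{y}_1\,t+a$ is a real quadratic with leading coefficient $b<0$ and constant term $a>0$. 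Since the product of the roots of $p$ equals $a/b<0$, the quadratic has exactly two real roots $t_1>0>t_2$, and each $w_{t_i}$ is neutral in $\mc{F}_1$; hence $\K{Tw_{t_i}}{Tw_{t_i}}_2\geq 0$ by hypothesis.

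Writing $A:=\K{Tz}{Tz}_2$, $B:=\K{Ty}{Ty}_2$ and $C:=\real\K{Tz}{Ty}_2$, the two neutrality conditions read $q(t_i):=B\,t_i^2+2C\,t_i+A\geq 0$ for $i=1,2$. The hard part, and the only genuinely clever step, is to combine these two inequalities so as to cancel the uncontrolled cross term $C$. Taking the positive combination $(-t_2)\,q(t_1)+t_1\,q(t_2)\geq 0$, the coefficient of $C$ is $2\bigl((-t_2)t_1+t_1t_2\bigr)=0$, while, using $t_1t_2=a/b$, the surviving terms factor as $(t_1-t_2)\bigl(A-\tfrac{a}{b}B\bigr)\geq 0$. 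Dividing by $t_1-t_2>0$ and then by $a>0$ yields exactly $\tfrac{B}{b}\leq\tfrac{A}{a}$, which is the desired comparison. I expect the choice of this particular nonnegative combination to be the main obstacle, together with the bookkeeping that both multipliers $-t_2$ and $t_1$ are positive, so that no inequality is reversed when forming the combination.

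For the ``moreover'' part I would argue as follows. Fixing any $y\in\mc{P}_1^{--}$, the comparison shows that $\{\K{Tz}{Tz}_2/\K{z}{z}_1:\, z\in\mc{P}_1^{++}\}$ is bounded below, so $\mu_+(T)$ exists; symmetrically $\mu_-(T)$ exists as a supremum bounded above by any $z$-quotient. Passing to the supremum over $y$ and then the infimum over $z$ in the comparison gives $\mu_-(T)\leq\mu_+(T)$. Finally, for $\mu\in[\mu_-(T),\mu_+(T)]$ and arbitrary $x\in\mc{F}_1$ I would split into three cases according to the sign of $\K{x}{x}_1$: if $\K{x}{x}_1=0$ then $\K{Tx}{Tx}_2\geq 0=\mu\K{x}{x}_1$ directly from $T(\mc{P}_1^0)\subset\mc{P}_2^+$; if $\K{x}{x}_1>0$ then its quotient is $\geq\mu_+(T)\geq\mu$, and multiplying by the positive quantity $\K{x}{x}_1$ preserves the inequality; and if $\K{x}{x}_1<0$ then its quotient is $\leq\mu_-(T)\leq\mu$, and multiplying by the negative quantity $\K{x}{x}_1$ reverses it to the same conclusion $\K{Tx}{Tx}_2\geq\mu\K{x}{x}_1$. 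This covers all $x\in\mc{F}_1$ and completes the proof.
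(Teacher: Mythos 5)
The paper does not actually prove this lemma: it is imported verbatim from Azizov--Iokhvidov \cite[Chapter 1, \S 1]{Azizov}, so there is no in-paper argument to compare against. Your proof is correct and self-contained, and it is essentially the classical argument for this fact. The core computation checks out: for real $t$ the quadratic $p(t)=\K{z+ty}{z+ty}_1=bt^2+2\real\K{z}{y}_1\,t+a$ has $p(0)=a>0$ and leading coefficient $b<0$, hence two real roots $t_1>0>t_2$ with $t_1t_2=a/b$ (and the neutrality hypothesis applies at each root even if $w_{t_i}=0$); the combination $(-t_2)q(t_1)+t_1q(t_2)$ has positive multipliers, annihilates the cross term $C$, and equals $(t_1-t_2)\bigl(A-\tfrac{a}{b}B\bigr)$, so dividing by $t_1-t_2>0$ and then by $a>0$ gives $B/b\leq A/a$ as claimed. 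Note also that working with real $t$ makes both $p$ and $q$ real quadratics automatically, so no phase normalization of $y$ is needed. The reduction of the ``moreover'' part to this comparison is routine and your three sign cases are handled correctly. The only point left implicit is that $\mu_\pm(T)$ are finite real numbers only when both $\mc{P}_1^{++}$ and $\mc{P}_1^{--}$ are nonempty; this is harmless here, since the lemma is invoked in the paper precisely when $V^\#V$ is indefinite.
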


Observe that if $0\in[\mu_-,\mu_+]$ then $T^\#T$ is a positive semidefinite operator.

\section{Indefinite smoothing splines}

From now on  $(\EE,\K{\cdot}{\cdot}_\EE)$ and $(\KK,\K{\cdot}{\cdot}_\KK)$ are Krein spaces and $\HH$ is a Hilbert space, $T\in L(\HH,\KK)$ and $V\in L(\HH,\EE)$ are two given surjective operators,
and $V^\#V$ is assumed to be indefinite.

Consider the Tikhonov regularization of the problem posed in \eqref{QCQP}:

\begin{prob}\label{pb smooth}
Given $\rho \in\RR$ and fixed $z_0\in \EE$, analyze the existence of
\begin{equation*}
\min_{x\in\HH}\parentesis{\K{Tx}{Tx}_\KK+\rho\K{Vx-z_0}{Vx-z_0}_\EE},
\end{equation*}
and if the minimum exists, find the set of arguments at which it is attained.
\end{prob}

Hereafter, we address it as the {\it indefinite abstract smoothing problem}.

\medskip
%
%
 
As it was mentioned in the Introduction, Problem \ref{pb smooth} can be restated as an indefinite least-squares problem.
Define an indefinite inner product on $\KK\x \EE$ by:
\begin{equation}\label{met indef para KxE}
\K{(y,z)}{(y',z')}_{\rho}=\K{y}{y'}_\KK + \rho\K{z}{z'}_\EE,\quad\quad \textrm{$y,y'\in \KK$ and $z,z'\in\EE$}.
\end{equation}
It is easy to see that the space $(\KK\x\EE,\K{\cdot}{\cdot}_\rho)$ is a Krein space if and only if $\rho\neq0$. 
Also, defining the operator $L: \HH \ra \KK\x\EE$ by
\begin{equation}\label{L}
Lx=(Tx,Vx), \ \ \ \ x\in \HH,
\end{equation}
it is immediate that Problem \ref{pb smooth} is equivalent to the following:

\begin{prob}\label{pb smooth 1}
Given $\rho \in\RR$ and a fixed $z_0\in \EE$, analize the existence of
\begin{equation}\label{cuadr min}
\min_{x\in\HH} \K{Lx-(0,z_0)}{Lx-(0,z_0)}_{\rho},
\end{equation}
and if the minimum exists, find the set of arguments at which it is attained.
\end{prob}

Indefinite least-squares problems have been thoroughly studied before, see e.g. \cite{Bojanczyk,GMMP10b,GMMP16} and the references therein. It is well-known that, if $\rho\neq0$, there exists a solution to
\eqref{cuadr min} if and only if $R(L)$ is nonnegative in $(\KK\x\EE,\K{\cdot}{\cdot}_\rho)$ and $(0,z_0)\in R(L)+R(L)^{\ort}$, see \cite[Thm. 8.4]{Bognar}. In this case, 
\begin{equation*}
	\tilde{x}\in\HH \ \text{is a solution to Problem \ref{pb smooth 1}} \ \ \ \Longleftrightarrow \ \ \ L\tilde{x} - (0,z_0)\in R(L)^{\ort}.
\end{equation*}
Moreover, if $\tilde{x}$ is a particular solution, the set of solutions is the affine manifold
\[
\tilde{x} + N(L^\#L),
\]  
where $L^\#$ stands for the adjoint operator of $L$, see the Preliminaries.


\medskip
Next, we show that the existence of a constant $\rho\neq 0$ such that $R(L)$ is a nonnegative subspace of $(\KK\x\EE,\K{\cdot}{\cdot}_\rho)$ is completely determined by the action of $T$ on the
vectors of $\CV$.

\begin{prop}\label{prop cono positivo}
The following conditions are equivalent:
\begin{enumerate}
	\item $T(\CV)\subseteq\PK$;
	\item there exists $\rho\in\RR$ such that $T^\#T+\rho V^\#V\in \mathcal{L}(\HH)^+$.
\end{enumerate}

\end{prop}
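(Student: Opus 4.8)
The plan is to recognize this statement as an instance of the S-procedure (S-lemma) for an equality constraint, and to reduce it entirely to the quoted Lemma~\ref{mapeo}. The implication $(2)\Rightarrow(1)$ is immediate: if $T^\#T+\rho V^\#V\in\mathcal{L}(\HH)^+$ then, since $T^\#T$ and $V^\#V$ are selfadjoint on the Hilbert space $\HH$,
\[
\K{Tx}{Tx}_\KK+\rho\K{Vx}{Vx}_\EE=\PI{(T^\#T+\rho V^\#V)x}{x}\ge 0,\qquad x\in\HH.
\]
Restricting this to $x\in\CV$, where by definition $\K{Vx}{Vx}_\EE=0$, gives $\K{Tx}{Tx}_\KK\ge 0$, i.e. $Tx\in\PK$; hence $T(\CV)\subseteq\PK$.

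For the substantial implication $(1)\Rightarrow(2)$, I would endow $\HH$ with the Hermitian sesquilinear form $\K{x}{y}_1:=\K{Vx}{Vy}_\EE$, so that $(\HH,\K{\cdot}{\cdot}_1)$ is an indefinite inner product space. Because $V^\#V$ is assumed indefinite, this form attains both strictly positive and strictly negative values, so the cones $\mc{P}_1^{++}$ and $\mc{P}_1^{--}$ are both nonempty. The key observation is that the neutral cone of this form is exactly $\mc{P}_1^0=\CV$, whence hypothesis $(1)$, namely $T(\CV)\subseteq\PK$, is precisely the condition $T(\mc{P}_1^0)\subseteq\mc{P}_2^+$ required to apply Lemma~\ref{mapeo} with $\mc{F}_1=(\HH,\K{\cdot}{\cdot}_1)$, $\mc{F}_2=\KK$ and the same map $T$.

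Lemma~\ref{mapeo} then produces real numbers $\mu_-(T)\le\mu_+(T)$ such that
\[
\K{Tx}{Tx}_\KK\ \ge\ \mu\,\K{Vx}{Vx}_\EE\qquad\text{for every }x\in\HH\ \text{and every }\mu\in[\mu_-(T),\mu_+(T)].
\]
(Here both $\mu_\pm$ are finite: by the monotonicity inequality of that lemma every quotient over $\mc{P}_1^{++}$ dominates every quotient over $\mc{P}_1^{--}$, so fixing one vector in each nonempty cone bounds the infimum from below and the supremum from above.) Rewriting the displayed inequality in operator form, $\PI{(T^\#T-\mu V^\#V)x}{x}\ge 0$ for all $x\in\HH$, so choosing any $\mu\in[\mu_-(T),\mu_+(T)]$ and setting $\rho:=-\mu$ yields $T^\#T+\rho V^\#V\in\mathcal{L}(\HH)^+$, which is $(2)$.

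The argument is short once Lemma~\ref{mapeo} is available; the only points demanding care are verifying that $(\HH,\K{V\cdot}{V\cdot}_\EE)$ genuinely meets the hypotheses of that lemma (in particular that the indefiniteness of $V^\#V$ forces $\mc{P}_1^{++}$ and $\mc{P}_1^{--}$ to be nonempty, so that $\mu_\pm(T)$ are finite and $[\mu_-(T),\mu_+(T)]$ is a nonempty interval), and tracking the sign convention $\rho=-\mu$. I do not anticipate a real obstacle: the conceptual content—turning a sign condition on the neutral cone into a global operator inequality—is entirely absorbed by Lemma~\ref{mapeo}.
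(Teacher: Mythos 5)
Your proposal is correct and follows essentially the same route as the paper: the reverse implication by restricting the operator inequality to $\CV$, and the forward implication by applying Lemma~\ref{mapeo} to $\HH$ equipped with the form $\K{Vx}{Vy}_\EE$ and setting $\rho=-\mu$. The extra care you take in checking that the indefiniteness of $V^\#V$ makes both cones nonempty (so that $\mu_\pm(T)$ are finite) is a detail the paper leaves implicit, but it does not change the argument.
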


\begin{proof}

By Lemma \ref{mapeo},
if $T(\CV)\subseteq \mc{P}^+(\KK)$ then there exists $\mu\in\RR$ such that 
\[
\K{Tx}{Tx}_\KK\geq \mu \K{Vx}{Vx}_\EE \ \ \ \text{ for every $x\in\HH$}.
\]
But this is equivalent to saying that $T^\#T -\mu V^\#V\in \mathcal{L}(\HH)^+$. Thus, (2) holds considering $\rho=-\mu$. 

Conversely, suppose that $T^\#T +\rho V^\#V\in \mathcal{L}(\HH)^+$ for some $\rho\in\RR$. Then, for every $x\in\CV$, 
\[
0\leq\PI{(T^\#T+\rho V^\#V)x}{x}=\K{Tx}{Tx}+\rho\K{Vx}{Vx}=\K{Tx}{Tx}. \qedhere
\]
\end{proof}

If $T(\CV)\subseteq\PK$, then
Lemma \ref{mapeo} states that 
\begin{equation}\label{rhos}
\rho_-:=-\inf_{\{x:\ \K{Vx}{Vx}>0\}}\frac{\K{Tx}{Tx}}{\K{Vx}{Vx}}\ \ \ \text{and}\ \ \ \rho_+:=-\sup_{\{x:\ \K{Vx}{Vx}<0\}}\frac{\K{Tx}{Tx}}{\K{Vx}{Vx}}
\end{equation}
exist, and $\rho_-\leq\rho_+$. Furthermore, $T^\#T+\rho V^\#V\in\mc{L}(\HH)^+$ if and only if $\rho\in[\rho_-,\rho_+]$.
Since $(\KK\x\EE,\K{\cdot}{\cdot}_\rho)$ is a Krein space if and only if $\rho\neq0$, the argument above leads to the following corollary. 
\begin{cor}\label{cor_rhos}
Assume that $T(\CV)\subseteq\PK$, and consider $\rho_\pm$ defined by \eqref{rhos}. If $\rho\neq0$ then the following conditions are equivalent:
\begin{enumerate}
\item $T^\#T+\rho V^\#V\in\mc{L}(\HH)^+$;
\item $\rho\in[\rho_-,\rho_+]$;
\item $R(L)$ is a nonnegative subspace of $(\KK\x\EE,\K{\cdot}{\cdot}_\rho)$.
\end{enumerate}
\end{cor}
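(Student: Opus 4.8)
The plan is to establish the two equivalences (1) $\Leftrightarrow$ (2) and (1) $\Leftrightarrow$ (3) separately, noting that both rest on the single pointwise inequality $\K{Tx}{Tx}_\KK + \rho\K{Vx}{Vx}_\EE \geq 0$, since $T^\#T + \rho V^\#V \in \mc{L}(\HH)^+$ means exactly $\PI{(T^\#T+\rho V^\#V)x}{x} = \K{Tx}{Tx}_\KK + \rho\K{Vx}{Vx}_\EE \geq 0$ for all $x$.

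For (1) $\Leftrightarrow$ (2), which is essentially already recorded in the discussion preceding the statement, I would apply Lemma \ref{mapeo} with $\mc{F}_1 = \HH$ equipped with the indefinite form $(x,y)\mapsto\K{Vx}{Vy}_\EE$ (indefinite precisely because $V^\#V$ is), with $\mc{F}_2 = \KK$, and with the map taken to be $T$ itself. Under this identification the neutral cone $\mc{P}_1^0$ is exactly $\CV$, so the hypothesis $T(\CV)\subseteq\PK$ is literally $T(\mc{P}_1^0)\subset\mc{P}_2^+$ and the lemma applies. The resulting quantities $\mu_\pm(T)$ are tied to \eqref{rhos} by $\rho_- = -\mu_+(T)$ and $\rho_+ = -\mu_-(T)$, whence $\rho_-\leq\rho_+$. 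The lemma's final inequality, $\K{Tx}{Tx}_\KK \geq \mu\K{Vx}{Vx}_\EE$ for all $x$ iff $\mu\in[\mu_-(T),\mu_+(T)]$ (the converse direction being the elementary observation obtained by testing such an inequality on positive and on negative vectors), becomes, after the substitution $\mu = -\rho$, the statement that $T^\#T+\rho V^\#V\in\mc{L}(\HH)^+$ iff $\rho\in[\rho_-,\rho_+]$. The restriction $\rho\neq0$ plays no role in this step.

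For (1) $\Leftrightarrow$ (3), I would compute the $\rho$-self-product of a generic element of $R(L)$. Since $Lx = (Tx,Vx)$, the definition \eqref{met indef para KxE} gives, for every $x\in\HH$,
\[
\K{Lx}{Lx}_\rho = \K{Tx}{Tx}_\KK + \rho\K{Vx}{Vx}_\EE = \PI{(T^\#T + \rho V^\#V)x}{x}.
\]
Because every vector of $R(L)$ has the form $Lx$, the subspace $R(L)$ is nonnegative in $(\KK\x\EE,\K{\cdot}{\cdot}_\rho)$ exactly when this expression is nonnegative for all $x$, i.e. exactly when $T^\#T+\rho V^\#V\in\mc{L}(\HH)^+$. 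Here the hypothesis $\rho\neq0$ enters only to ensure, via \eqref{met indef para KxE}, that $(\KK\x\EE,\K{\cdot}{\cdot}_\rho)$ is a genuine Krein space, so that ``nonnegative subspace'' is read in the Krein-space sense relevant to Problem \ref{pb smooth 1}.

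I do not anticipate a genuine obstacle: the entire content lies in the bookkeeping of the first step --- correctly matching the ambient spaces and the map in Lemma \ref{mapeo} and tracking the sign convention $\rho=-\mu$ that converts $\mu_\pm(T)$ into $\rho_\mp$ --- together with the elementary remark in the second step that nonnegativity of the \emph{subspace} $R(L)$ collapses to a single pointwise inequality thanks to the parametrized form $Lx=(Tx,Vx)$ of its elements.
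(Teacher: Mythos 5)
Your proposal is correct and follows essentially the same route as the paper: the equivalence (1)$\Leftrightarrow$(2) is obtained from Lemma \ref{mapeo} applied to $\HH$ with the form $\K{V\cdot}{V\cdot}_\EE$ (exactly the argument the paper records in the discussion surrounding \eqref{rhos}, with the same sign bookkeeping $\rho=-\mu$), and (1)$\Leftrightarrow$(3) reduces to the identity $\K{Lx}{Lx}_\rho=\PI{(T^\#T+\rho V^\#V)x}{x}$. The only cosmetic difference is that the paper reaches this identity by first computing $L^\#$ and observing $L^\#L=T^\#T+\rho V^\#V$, whereas you expand $\K{Lx}{Lx}_\rho$ directly; both are the same computation.
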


\begin{proof}
The equivalence between (1) and (2) is derived from Lemma \ref{mapeo}.
\smallskip

To establish their equivalence with (3), let us calculate the adjoint operator of $L$: 
given $x\in\HH$ and $(y,z)\in\KK\x\EE$, 
\begin{align*}
\K{Lx}{(y,z)}_\rho &=  \K{Tx}{y}_\KK + \rho\K{Vx}{z}_\EE=\PI{x}{T^\#y} + \rho\PI{x}{V^\#z} \\ &=\PI{x}{T^\#y + \rho V^\#z}.
\end{align*}
Hence, 
\[
L^\#(y,z)=T^\#y + \rho V^\#z, \quad (y,z)\in\KK\x\EE,
\]
and it is immediate that $L^\#L=T^\#T+ \rho V^\#V$. Therefore, $\K{Lx}{Lx}_\rho\geq0$ for every $x\in\HH$ 
if and only if $T^\#T+\rho V^\#V\in\mc{L}(\HH)^+$.
\end{proof}

From now on, we assume that $T(\CV)\subseteq\PK$. In this case there exists an interval of ``admissible parameters'' $[\rho_-,\rho_+]$ such that $T^\#T + \rho V^\#V\in \mc{L}(\HH)^+$ if $\rho\in [\rho_-,\rho_+]$. If $\rho_-=\rho_+ =0$ then Problem \ref{pb smooth} becomes trivial. Therefore, we also assume that $\rho_-\neq 0$ or $\rho_+\neq 0$, and we consider a fixed $\rho\in[\rho_-,\rho_+]$, $\rho\neq0$.

\smallskip

Rewriting the discussion after Problem \ref{pb smooth 1} in terms of the original data $T$, $V$ and $\rho$, the following characterization of the existence of solutions to the indefinite abstract smoothing problem is obtained, see also \cite[Lemma 4.3]{GMMP10}. 

\begin{prop}\label{cond nec z0}
Given $z_0\in \EE$, Problem \ref{pb smooth} admits a solution if and only if $T^\#T + \rho V^\#V\in \mathcal{L}(\HH)^+$ and $V^\#z_0\in R(T^\#T + \rho V^\#V)$.
\end{prop}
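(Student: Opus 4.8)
The plan is to push everything through the indefinite least-squares criterion already recalled after Problem \ref{pb smooth 1}, and then to rewrite its two hypotheses purely in terms of $T$, $V$, $\rho$ and $z_0$. Since Problem \ref{pb smooth} is equivalent to Problem \ref{pb smooth 1} through the operator $L$ of \eqref{L}, and since $\rho\neq 0$ makes $(\KK\x\EE,\K{\cdot}{\cdot}_\rho)$ a Krein space, a solution exists if and only if (a) $R(L)$ is nonnegative in $(\KK\x\EE,\K{\cdot}{\cdot}_\rho)$ and (b) $(0,z_0)\in R(L)+R(L)^{\ort}$. The proof then amounts to translating (a) and (b) back into the data.

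Condition (a) is immediate from Corollary \ref{cor_rhos}: under the standing hypothesis $T(\CV)\subseteq\PK$, nonnegativity of $R(L)$ is exactly $T^\#T+\rho V^\#V\in\mc{L}(\HH)^+$, which is the first condition in the statement.

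For (b) the first step is to identify the orthogonal companion. From $\K{Lx}{(y,z)}_\rho=\PI{x}{L^\#(y,z)}$ for every $x\in\HH$ one gets $(y,z)\in R(L)^{\ort}$ if and only if $L^\#(y,z)=0$, i.e. $R(L)^{\ort}=N(L^\#)$. I would then carry along the two identities computed in the proof of Corollary \ref{cor_rhos}, namely $L^\#L=T^\#T+\rho V^\#V$ and $L^\#(0,z_0)=\rho V^\#z_0$, and argue both implications. If $(0,z_0)=Lx+n$ with $n\in N(L^\#)$, applying $L^\#$ yields $\rho V^\#z_0=(T^\#T+\rho V^\#V)x$, so that $V^\#z_0\in R(T^\#T+\rho V^\#V)$ because $\rho\neq 0$. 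Conversely, if $V^\#z_0=(T^\#T+\rho V^\#V)w=L^\#Lw$ for some $w\in\HH$, then $L^\#\big((0,z_0)-L(\rho w)\big)=\rho V^\#z_0-L^\#L(\rho w)=0$, hence $(0,z_0)-L(\rho w)\in N(L^\#)=R(L)^{\ort}$ and $(0,z_0)\in R(L)+R(L)^{\ort}$.

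The only genuinely new step is this translation of (b); everything else is a direct appeal to Corollary \ref{cor_rhos} and the quoted least-squares theorem. The mild subtleties to watch are the Krein-space identity $R(L)^{\ort}=N(L^\#)$ (the analogue of $R(L)^\perp=N(L^*)$, valid because $L$ is bounded and the companion is taken with respect to $\K{\cdot}{\cdot}_\rho$, with no closedness needed) and the harmless factor $\rho$, which can be absorbed throughout since $\rho\neq 0$ and ranges and nullspaces are subspaces.
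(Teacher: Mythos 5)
Your proposal is correct and follows essentially the same route as the paper: reduce to the indefinite least-squares criterion for Problem \ref{pb smooth 1}, identify nonnegativity of $R(L)$ with $T^\#T+\rho V^\#V\in\mc{L}(\HH)^+$, and translate $(0,z_0)\in R(L)+R(L)^{\ort}$ via the identities $L^\#L=T^\#T+\rho V^\#V$ and $L^\#(0,z_0)=\rho V^\#z_0$. The paper phrases the second step through the orthogonality condition $\K{L\tilde{x}-(0,z_0)}{Lx}_\rho=0$ rather than through $R(L)^{\ort}=N(L^\#)$, but this is the same computation.
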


\begin{proof}
As it was mentioned above, Problem \ref{pb smooth} admits a solution if and only if $R(L)$ is nonnegative and $(0,z_0)\in R(L)+R(L)^{\ort}$. Proposition \ref{prop cono positivo} shows that
$R(L)$ is nonnegative if and only if $L^\#L=T^\#T+ \rho V^\#V\in \mathcal{L}(\HH)^+$.

Also, $(0,z_0)\in R(L)+R(L)^{\ort}$ if and only if there exists $\tilde{x}\in\HH$ such that 
\[
\K{L\tilde{x} - (0,z_0)}{Lx}_\rho=0, \quad \text{for every $x\in \HH$},
\]
or equivalently, $L^\#(L\tilde{x})= L^\#(0,z_0)=\rho V^\#z_0$. Thus, 
\begin{equation*}
(0,z_0)\in R(L)+R(L)^{\ort}\quad \text{ if and only if } \quad V^\#z_0\in R(T^\#T + \rho V^\#V). \qedhere
\end{equation*}
\end{proof}

\begin{Def}
Any element $\tilde{x}\in \HH$ that is a solution to Problem \ref{pb smooth} is called a \emph{$(T,V,\rho)$-smoothing spline} to $z_0\in\EE$. 
The set of $(T,V,\rho)$-smoothing splines to $z_0$ is denoted by $sm(T,V,\rho,z_0)$.
\end{Def}

Since $T$ and $V$ are fixed along this work, $sm(T,V,\rho,z_0)$ is shortened to $sm(\rho,z_0)$.

\begin{cor}\label{ec normal}
Assume that $sm(\rho,z_0)\neq \varnothing$ for a vector $z_0\in\EE$. Then, $\tilde{x}\in sm(\rho,z_0)$ if and only if $\tilde{x}$ is a solution to the equation:
\begin{equation}\label{eq normal}
(T^\#T + \rho V^\#V)x= \rho V^\#z_0.
\end{equation}
In this case,
\[
sm(\rho,z_0)= \tilde{x}+ N(T^\#T + \rho V^\#V).
\]
\end{cor}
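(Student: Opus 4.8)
The plan is to translate the abstract characterization of solutions to Problem~\ref{pb smooth 1}, recalled in the discussion following it, into the language of the original data $T$, $V$, $\rho$ and $z_0$. Since $sm(\rho,z_0)\neq\varnothing$ by hypothesis, $R(L)$ is a nonnegative subspace of $(\KK\x\EE,\K{\cdot}{\cdot}_\rho)$ and $(0,z_0)\in R(L)+R(L)^{\ort}$; hence the quoted solvability result applies, and $\tilde x$ is a solution if and only if $L\tilde x-(0,z_0)\in R(L)^{\ort}$.

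The first step is to identify $R(L)^{\ort}$ with $N(L^\#)$. Indeed, $w\in R(L)^{\ort}$ means $\K{w}{Lx}_\rho=0$ for every $x\in\HH$; using the adjoint relation computed in the proof of Corollary~\ref{cor_rhos}, one has $\K{w}{Lx}_\rho=\PI{L^\#w}{x}_\HH$, so the identity holds for all $x$ precisely when $L^\#w=0$. Applying this with $w=L\tilde x-(0,z_0)$ shows that $\tilde x\in sm(\rho,z_0)$ if and only if $L^\#\parentesis{L\tilde x-(0,z_0)}=0$.

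The second step is purely computational: from the proof of Corollary~\ref{cor_rhos} we have $L^\#L=T^\#T+\rho V^\#V$, and evaluating the adjoint on $(0,z_0)$ gives $L^\#(0,z_0)=\rho V^\#z_0$, as already noted in the proof of Proposition~\ref{cond nec z0}. Substituting these two identities turns $L^\#\parentesis{L\tilde x-(0,z_0)}=0$ into the normal equation~\eqref{eq normal}, namely $(T^\#T+\rho V^\#V)\tilde x=\rho V^\#z_0$, which establishes the claimed equivalence.

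Finally, for the description of the solution set, fix a particular solution $\tilde x$. For any $x'\in\HH$, subtracting the two normal equations gives $(T^\#T+\rho V^\#V)(x'-\tilde x)=0$, so $x'$ solves \eqref{eq normal} if and only if $x'-\tilde x\in N(T^\#T+\rho V^\#V)$; this yields $sm(\rho,z_0)=\tilde x+N(T^\#T+\rho V^\#V)$, in agreement with the affine manifold $\tilde x+N(L^\#L)$ recorded after Problem~\ref{pb smooth 1}. I do not expect a genuine obstacle here, since the statement is essentially a reformulation of the indefinite least-squares theory in terms of the original data; the only point requiring care is the conjugation bookkeeping in the adjoint relation that identifies $R(L)^{\ort}$ with $N(L^\#)$.
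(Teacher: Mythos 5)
Your proposal is correct and follows exactly the route the paper intends: the corollary is left without an explicit proof there because it is an immediate translation of the solvability discussion after Problem~\ref{pb smooth 1} (namely $\tilde{x}$ solves the problem iff $L\tilde{x}-(0,z_0)\in R(L)^{\ort}=N(L^\#)$, with solution set $\tilde{x}+N(L^\#L)$) using the identities $L^\#L=T^\#T+\rho V^\#V$ and $L^\#(0,z_0)=\rho V^\#z_0$ already computed in Corollary~\ref{cor_rhos} and Proposition~\ref{cond nec z0}. Your bookkeeping of the adjoint relation and the affine description of the solution set are both accurate.
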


If the conditions established in Proposition \ref{cond nec z0} are satisfied, then 
\begin{equation}\label{solucion_daga}
\tilde{x}:=\rho (T^\#T + \rho V^\#V)^\dag V^\#z_0 \in sm(\rho,z_0).
\end{equation}
If $T^\#T+ \rho V^\#V\in\mc{L}(\HH)^+$, the smoothing problem admits a solution for those $z_0\in\EE$ such that $V^\#z_0\in R(L^\#L)$,
i.e. the set of admissible points for the indefinite abstract smoothing problem is
\[
\mc{A}:=(V^\#)^{-1}(R(L^\#L)).
\]
It is easy to see that the set of admissible points is closed if and only if the  subspace $R(L^\#L)\cap R(V^\#)$ is closed in $\HH$.

Since the inclusion $(V^\#)^{-1}(R(L^\#L))\subseteq V(N(L^\#L))^{\ort}$ holds,
in order to have a well-posed problem it is necessary that 
\[
z_0\in V(N(T^\#T+ \rho V^\#V))^{\ort},
\]
i.e. the set of admissible points $\mc{A}$ is contained in $V(N(T^\#T+ \rho V^\#V))^{\ort}$. 

\medskip 

The following lemmas lead to characterizing the case in which the set of admissible
points coincides with this closed subspace. 

\begin{lema}\label{admisibles}
Let $(y,z)\in \KK\x\EE$. Then, $(y,z)\in \ol{R(L) + R(L)^{\ort}}$ if and only if  
\[
z-VT^\dag y\in V(N(T^\#T + \rho V^\#V))^{\ort}.
\] 
\end{lema}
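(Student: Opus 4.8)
The plan is to transport the entire statement into the Hilbert space $\HH$ through the adjoint $L^\#$, where $Lx=(Tx,Vx)$ as in \eqref{L}. Three elementary identities drive the argument. First, $R(L)^{\ort}=N(L^\#)$, since $(y,z)\in R(L)^{\ort}\Leftrightarrow\PI{x}{L^\#(y,z)}=0$ for all $x\Leftrightarrow L^\#(y,z)=0$; and $L^\#L=T^\#T+\rho V^\#V=:G$ (computed in the proof of Corollary \ref{cor_rhos}, where it is also shown that $G\in\mc{L}(\HH)^+$). Second, the splitting $u=Lv+(u-Lv)$ gives $\W:=R(L)+R(L)^{\ort}=(L^\#)^{-1}(R(G))$: if $L^\#u=L^\#Lv$ then $u-Lv\in N(L^\#)=R(L)^{\ort}$, and the converse is immediate. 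Third, for $m\in N(G)$ one has $L^\#Lm=0$, so $Lm\in R(L)\cap N(L^\#)$; conversely every isotropic vector of $R(L)$ arises this way, whence $L(N(G))=R(L)\cap R(L)^{\ort}=:(R(L))^\circ$.

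Next I would identify the right-hand side of the statement with a single condition on $L^\#(y,z)$. Since $T$ is surjective, $T^\dag$ is bounded and $TT^\dag=I_\KK$, so $T^\#TT^\dag y=T^\# y$. Using $\rho V^\#V=G-T^\#T$ one computes
\[
V^\#(z-VT^\dag y)=\tfrac1\rho L^\#(y,z)-\tfrac1\rho GT^\dag y,
\]
so that $V^\#(z-VT^\dag y)\equiv\tfrac1\rho L^\#(y,z)\pmod{R(G)}$. As $N(G)^{\bot}=\ol{R(G)}$ and $V(N(G))^{\ort}=(V^\#)^{-1}(N(G)^{\bot})=(V^\#)^{-1}(\ol{R(G)})$, the condition $z-VT^\dag y\in V(N(G))^{\ort}$ becomes \emph{equivalent} to $L^\#(y,z)\in\ol{R(G)}$, i.e. to $(y,z)\in(L^\#)^{-1}(\ol{R(G)})$. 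Thus the lemma is exactly the operator identity $\ol{\W}=(L^\#)^{-1}\big(\ol{R(G)}\big)$.

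The inclusion $\subseteq$ is immediate: by the second identity $\W=(L^\#)^{-1}(R(G))\subseteq(L^\#)^{-1}(\ol{R(G)})$, and the latter set is closed, being the preimage of a closed subspace under the bounded map $L^\#$; hence it contains $\ol{\W}$. For the reverse inclusion I would pass to orthogonal companions. Using $(L^\#)^{-1}(\mathcal S)=(L(\mathcal S^{\bot}))^{\ort}$ for closed $\mathcal S\subseteq\HH$ (which follows from $\PI{L^\# u}{s}=\K{u}{Ls}_\rho$) with $\mathcal S=\ol{R(G)}$, the right-hand side equals $(L(N(G)))^{\ort}=((R(L))^\circ)^{\ort}$; while the double-companion identity gives $\ol{\W}=\W^{\ort\ort}=((\ol{R(L)})^\circ)^{\ort}$, since $\W^{\ort}=R(L)^{\ort}\cap\ol{R(L)}=(\ol{R(L)})^\circ$. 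Taking companions once more, the reverse inclusion is equivalent to the density statement
\[
(\ol{R(L)})^\circ\subseteq\ol{(R(L))^\circ}=\ol{L(N(G))}.
\]

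This density is the heart of the matter and I expect the last step of its proof to be the main obstacle. I would argue by approximation, using that $\ol{R(L)}$ is nonnegative by Corollary \ref{cor_rhos}. Given $\eta\in(\ol{R(L)})^\circ$, write $\eta=\lim_n Lx_n$; neutrality of $\eta$ gives $\K{Lx_n}{Lx_n}_\rho=\PI{Gx_n}{x_n}=\|G^{1/2}x_n\|^2\to0$, hence $G^{1/2}x_n\to0$. Decomposing $x_n=m_n+r_n$ with $m_n\in N(G)$ and $r_n\in N(G)^{\bot}=\ol{R(G)}$, one has $Lm_n\in L(N(G))=(R(L))^\circ$ and $G^{1/2}r_n\to0$, so it remains to prove $Lr_n\to0$ (then $\eta=\lim Lm_n\in\ol{(R(L))^\circ}$). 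This is where the difficulty concentrates: when $R(G)$ is not closed, $G^{1/2}r_n\to0$ with $r_n\in\ol{R(G)}$ does not by itself force $r_n\to0$, and one must exploit the surjectivity of $T$ and $V$ — equivalently, that $T^\#$ and $V^\#$ are bounded below — to rule out the escape of $r_n$ into the small-spectrum directions of $G$ and conclude $Lr_n\to0$. When $R(G)$ is closed the step is automatic, since then $G^{1/2}$ is bounded below on $\ol{R(G)}$.
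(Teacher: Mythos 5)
Your reduction is correct and is, in substance, the paper's own argument transported to $\HH$ through $L^\#$: the identity $R(L)^\circ=L(N(L^\#L))$ is exactly \eqref{parte isotropica} in the paper's proof, and your computation showing that $z-VT^\dag y\in V(N(G))^{\ort}$ is equivalent to $L^\#(y,z)\in\ol{R(G)}$ (with $G=T^\#T+\rho V^\#V$) is the pulled-back version of the paper's direct evaluation $\K{(y,z)}{(-\rho(T^\dag)^\#V^\#Vx,Vx)}_\rho=\rho\K{z-VT^\dag y}{Vx}_\EE$ for $x\in N(L^\#L)$. Both routes reduce the lemma to the single identity $\ol{R(L)+R(L)^{\ort}}=(R(L)^\circ)^{\ort}$, which in your notation is precisely $\ol{\W}=(L^\#)^{-1}\bigl(\ol{R(G)}\bigr)$.

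The genuine gap is that you do not prove this identity. You correctly isolate the nontrivial half as the inclusion $(\ol{R(L)})^\circ\subseteq\ol{R(L)^\circ}$ and then leave it as an admitted obstacle; the sketch you offer does not close it, and the proposed mechanism is not convincing: from $\PI{Gr_n}{r_n}=\K{Tr_n}{Tr_n}_\KK+\rho\K{Vr_n}{Vr_n}_\EE\ra0$ one gets no control on $\|Tr_n\|^2+\|Vr_n\|^2$, and the surjectivity of $T$ and $V$ (equivalently, boundedness below of $T^\#$ and $V^\#$) acts on the wrong side of $L$ to supply that control. The paper does not confront this step at all: it invokes $\ol{R(L)+R(L)^{\ort}}=(R(L)^\circ)^{\ort}$ outright as a property of the isotropic part; this is immediate when $R(L)$ is closed, since then $(R(L)\cap R(L)^{\ort})^{\ort}=\ol{R(L)^{\ort}+R(L)^{\ort\ort}}=\ol{R(L)+R(L)^{\ort}}$, and in the general case it is exactly the density statement you got stuck on. So as written your argument establishes only the implication from $(y,z)\in\ol{R(L)+R(L)^{\ort}}$ to $z-VT^\dag y\in V(N(T^\#T+\rho V^\#V))^{\ort}$; for the converse you must either prove $(\ol{R(L)})^\circ\subseteq\ol{R(L)^\circ}$ in this setting or add the closed-range hypothesis of Lemma \ref{rango cdo}, under which the identity is elementary.
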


\begin{proof}
First, we prove that
\begin{equation}\label{parte isotropica}
R(L)^\circ= L(N(L^\#L)).
\end{equation}
In fact, the inclusion $L(N(L^\#L))\subseteq R(L)\cap N(L^\#)=R(L)^\circ$ is straightforward. To obtain the other inclusion, it suffices to apply $L$ to both sides
of the inclusion $L^{-1}(N(L^\#))\subseteq N(L^\#L)$.

Using that $T^\#$ is injective and $(T^\#)^\dag=(T^\dag)^\#$, is easy to see that $(T^\#T + \rho V^\#V)x=0$ if and only if $Tx=-\rho(T^\dag)^\#V^\#Vx$. Hence,
\begin{equation*}
R(L)^\circ=\set{(-\rho(T^\dag)^\#V^\#Vx,Vx)\in \KK\x\EE: \ x\in N(L^\#L)}.
\end{equation*}

Now, since $\ol{R(L) + R(L)^{\ort}}=(R(L)^\circ)^{\ort}$, the above description of $R(L)^\circ$ implies that $(y,z)\in\ol{R(L) + R(L)^{\ort}}$ if and only if 
\[
\K{(y,z)}{(-\rho(T^\dag)^\#V^\#Vx,Vx)}_\rho=0 \ \ \ \text{for every $x\in N(L^\#L)$}.
\]
But $\K{(y,z)}{(-\rho(T^\dag)^\#V^\#Vx,Vx)}_\rho=\K{y}{-\rho(T^\dag)^\#V^\#Vx}_\KK + \rho\K{z}{Vx}_\EE=\rho\K{z-VT^\dag y}{Vx}_\EE$. Hence, $(y,z)\in\ol{R(L) + R(L)^{\ort}}$ if and
only $\K{z-VT^\dag y}{Vx}_\EE=0$ for every $x\in N(L^\#L)$. Therefore, the assertion is proved.
\end{proof}

\begin{lema}\label{rango cdo}
The following conditions are equivalent:
\begin{enumerate}
	\item $R(L)$ is a closed subspace of $(\KK\x\EE, \K{\cdot}{\cdot}_\rho)$;
	\item $T(N(V))$ is a closed subspace of $\KK$;
	\item $N(T) + N(V)$ is a closed subspace of $\HH$.
\end{enumerate}
\end{lema}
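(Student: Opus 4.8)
The plan is to establish $(2)\Leftrightarrow(3)$ directly from Proposition \ref{R(AB)}, and then to close the loop with $(1)\Leftrightarrow(2)$, which I would prove by hand. A useful preliminary remark is that closedness of a subspace of $(\KK\x\EE,\K{\cdot}{\cdot}_\rho)$ is purely topological: for $\rho\neq0$ the Hilbert space norm associated with $\K{\cdot}{\cdot}_\rho$ is equivalent to the product of the fixed norms on $\KK$ and $\EE$, so the condition in $(1)$ does not really depend on $\rho$.

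For $(2)\Leftrightarrow(3)$, I would write $T(N(V))=R(T\,P_{N(V)})$, where $P_{N(V)}$ is the orthogonal projection of $\HH$ onto the closed subspace $N(V)$. Then $P_{N(V)}$ has closed range $N(V)$, and $T$ has closed range $\KK$ because it is surjective. Applying Proposition \ref{R(AB)} to the product $T\,P_{N(V)}$ (with $\HH$ as the middle space), $T(N(V))=R(T\,P_{N(V)})$ is closed if and only if $R(P_{N(V)})+N(T)=N(V)+N(T)$ is closed, which is precisely $(3)$.

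The implication $(1)\Rightarrow(2)$ is immediate: intersecting with the closed subspace $\KK\x\{0\}$ gives $R(L)\cap(\KK\x\{0\})=T(N(V))\x\{0\}$, since $(y,0)\in R(L)$ means $y=Tx$ with $x\in N(V)$; hence if $R(L)$ is closed, so is $T(N(V))$. For the converse $(2)\Rightarrow(1)$, I would exploit that $V$, being surjective with closed range, admits a bounded right inverse $V_r$ satisfying $VV_r=I_\EE$, so that $u-V_rVu\in N(V)$ for every $u\in\HH$. Given a convergent sequence $Lx_n=(Tx_n,Vx_n)\to(y,z)$, I would split $Tx_n=TV_r(Vx_n)+T(x_n-V_rVx_n)$; since $Vx_n\to z$ and $TV_r$ is bounded, the first term converges, so the second term $T(x_n-V_rVx_n)\in T(N(V))$ converges to some $w$. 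Closedness of $T(N(V))$ then yields $w=Tv$ with $v\in N(V)$, and the vector $x:=V_rz+v$ satisfies $Vx=z$ and $Tx=y$, so $(y,z)=Lx\in R(L)$.

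I expect $(2)\Rightarrow(1)$ to be the only step requiring real care, since the point is to reconstruct an honest preimage of the limit $(y,z)$. Surjectivity of $V$ (through the bounded right inverse) controls the $\EE$-component, while hypothesis $(2)$ is exactly what is needed to recover the remaining $N(V)$-component $v$ from the limit of $T(x_n-V_rVx_n)$. The other implications are either a one-line intersection argument or a direct invocation of Proposition \ref{R(AB)}.
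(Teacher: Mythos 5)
Your proof is correct, and your treatment of $(2)\Leftrightarrow(3)$ coincides with the paper's: both are a direct application of Proposition \ref{R(AB)} to the product $TP_{N(V)}$. Where you genuinely diverge is in the remaining implications. For $(1)\Rightarrow(2)$ the paper applies Proposition \ref{R(AB)} a second time, to $LP_{N(V)}$: assuming $R(L)$ closed, $L(N(V))=T(N(V))\x\{0\}$ is closed if and only if $N(V)+N(L)$ is closed, which holds automatically since $N(L)=N(T)\cap N(V)\subseteq N(V)$; your observation that $T(N(V))\x\{0\}=R(L)\cap(\KK\x\{0\})$ is an intersection of closed subspaces is even more elementary and equally valid. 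The substantive difference is in how the cycle is closed: the paper proves $(3)\Rightarrow(1)$ by passing to adjoints --- it invokes Deutsch's lemma that $N(T)+N(V)$ is closed if and only if $N(T)^\bot+N(V)^\bot$ is closed, identifies the latter sum with $R(L^\#)$, and uses that $R(L)$ and $R(L^\#)$ are simultaneously closed --- whereas you prove $(2)\Rightarrow(1)$ directly, splitting a convergent sequence $Lx_n$ by means of a bounded right inverse $V_r$ of $V$ (which exists since $V$ is surjective, e.g.\ $V_r=V^\dag$) and reconstructing an explicit preimage $x=V_rz+v$ of the limit. Your route is self-contained and makes transparent exactly where the surjectivity of $V$ and hypothesis $(2)$ enter; the paper's route is shorter on the page but outsources the work to the duality between closedness of $N(T)+N(V)$ and of $N(T)^\bot+N(V)^\bot$ together with the range--adjoint-range equivalence. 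Both arguments are complete and correct.
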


\begin{proof}
(1)$\to$(2):\ Assume that $R(L)$ is a closed subspace. Then, since $L(N(V))=T(N(V))\x\set{0}$, $T(N(V))$ is closed if and only if $L(N(V))$ is closed, or equivalently, if $N(V)+N(L)$ is closed, see Proposition \ref{R(AB)}. But, the last condition is fulfilled because
$N(L)=N(T)\cap N(V)\subseteq N(V)$.
\smallskip

\noi The equivalence between (2) and (3) also follows from Proposition \ref{R(AB)}.
\smallskip

\noi (3)$\to$(1):\ Recall that $N(T)+N(V)$ is closed if and only if $N(T)^\bot+N(V)^\bot$ is closed, see \cite[Lemma 11]{Deutsch}. If we assume that $N(T)+N(V)$ is closed,  then $R(L^\#)=N(T)^\bot+N(V)^\bot$ is closed, and consequently $R(L)$ is closed.   
\end{proof}

Now we are in conditions to characterize the case in which the set of admissible points coincides with the subspace $V(N(L^\#L))^{\ort}$.  

\begin{prop}\label{prop_admisibles_cerrado}
Assume that $T^\#T+\rho V^\#V\in \mc{L}(\HH)^+$. Then, the following conditions are equivalent:
\begin{enumerate}
\item $\mc{A}=V(N(T^\#T+\rho V^\#V))^{\ort}$;
\item $R(L)+R(L)^{\ort}$ is a closed subspace of $(\KK\x\EE,\K{\cdot}{\cdot}_\rho)$.
\end{enumerate}
\end{prop}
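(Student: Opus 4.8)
The plan is to translate both conditions into statements about the ``slices'' $\set{(0,z_0):z_0\in\EE}$ of the subspace $S:=R(L)+R(L)^{\ort}$ and of its closure $\ol S$, using the two descriptions already available. On the one hand, since $\mc{A}=(V^\#)^{-1}(R(L^\#L))$ and, as shown in the proof of Proposition \ref{cond nec z0}, $(0,z_0)\in R(L)+R(L)^{\ort}$ holds if and only if $V^\#z_0\in R(L^\#L)$, one obtains that $z_0\in\mc{A}$ if and only if $(0,z_0)\in S$. On the other hand, Lemma \ref{admisibles} applied with $y=0$ gives that $z_0\in V(N(L^\#L))^{\ort}$ if and only if $(0,z_0)\in\ol S$. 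After these identifications, condition (1) becomes exactly $S\cap(\set{0}\x\EE)=\ol S\cap(\set{0}\x\EE)$, whereas condition (2) is $S=\ol S$.

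With this dictionary, the implication (2)$\Rightarrow$(1) is immediate, since a closed $S$ trivially has the same slice as $\ol S$. The content of the statement is therefore the converse. For (1)$\Rightarrow$(2) I would argue as follows. As $S\subseteq\ol S$ always holds, and since the inclusion $\mc{A}\subseteq V(N(L^\#L))^{\ort}$ is already known, condition (1) is equivalent to the reverse inclusion, which after translation reads $\ol S\cap(\set{0}\x\EE)\subseteq S$; that is, every element of $\ol S$ of the form $(0,z)$ already lies in $S$. To promote this slice information to all of $\KK\x\EE$, I would use the surjectivity of $T$: given an arbitrary $(y,z)\in\ol S$, pick $x\in\HH$ with $Tx=y$, so that $Lx=(y,Vx)\in R(L)\subseteq S\subseteq\ol S$. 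Then the difference $(0,z-Vx)=(y,z)-(y,Vx)$ lies in $\ol S$ and in the slice $\set{0}\x\EE$, hence in $S$ by (1); consequently $(y,z)=Lx+(0,z-Vx)\in S$. This shows $\ol S\subseteq S$, so $S$ is closed.

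The genuinely delicate point is precisely this reduction in (1)$\Rightarrow$(2): the hypothesis only controls the behaviour of $\ol S$ along the slice $\set{0}\x\EE$, while closedness concerns the whole of $\KK\x\EE$, and the surjectivity of $T$ is what allows one to strip off the first coordinate of an arbitrary vector of $\ol S$ into a piece of $R(L)$, leaving a remainder in the controlled slice. I expect this to be the crux of the argument. By contrast, the translations of the first paragraph are routine once Proposition \ref{cond nec z0} and Lemma \ref{admisibles} are in hand; the only care needed is to record that the always-valid inclusion $\mc{A}\subseteq V(N(L^\#L))^{\ort}$ reduces (1) to the reverse (slice) inclusion exploited above.
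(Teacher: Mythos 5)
Your proof is correct and follows essentially the same route as the paper: both reduce an arbitrary $(y,z)\in\overline{R(L)+R(L)^{\ort}}$ to the slice $\set{0}\x\EE$ by subtracting an element $Lx$ of $R(L)$ with $Tx=y$ (the paper takes $x=T^\dag y$, packaged inside Lemma \ref{admisibles} and the solvability criterion for the indefinite least-squares problem), and then invoke the characterizations of $\mc{A}$ and of $V(N(L^\# L))^{\ort}$ as the $S$- and $\overline{S}$-slices. Your version merely makes the subspace arithmetic explicit where the paper cites \cite[Thm. 8.4]{Bognar}; no gap.
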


\begin{proof}
Assume that $\mc{A}=V(N(L^\#L))^{\ort}$.
Given $(y,z)\in \overline{R(L)+R(L)^{\ort}}$, Lemma \ref{admisibles} implies that $z-VT^\dag y\in\mc{A}$, or equivalently, the indefinite least-squares problem
consisting of minimizing $\K{Lx-(y,z)}{Lx-(y,z)}$ over $x\in\HH$ admits a solution.
In this case $(y,z)\in R(L)+ R(L)^{\ort}$, see \cite[Thm. 8.4]{Bognar}. Therefore, $R(L)+ R(L)^{\ort}$ is closed.
%

Conversely, assume that $R(L)+R(L)^{\ort}$ is closed.
First, the inclusion $\mc{A}\subseteq V(N(L^\#L))^{\ort}$ always holds. To see the other inclusion, if $z_0\in V(N(L^\#L))^{\ort}$ then, by Lemma \ref{admisibles}, it follows that
$(0,z_0)\in R(L)+R(L)^{\ort}$, i.e. $sm(\rho,z_0)\neq\varnothing$. Hence, $z_0\in\mc{A}$.
\end{proof}

As an immediate corollary, the following characterizes the conditions under which $R(L)$ is a pseudo-regular subspace.

\begin{cor}\label{todo admisible}
Assume that $T^\#T+\rho V^\#V\in \mc{L}(\HH)^+$ and $N(T) + N(V)$ is a closed subspace of $\HH$. Then, the following conditions are equivalent:
\begin{enumerate}
	\item $sm(\rho,z_0)\neq \varnothing$ for every $z_0\in V(N(T^\#T + \rho V^\#V))^{\ort}$;
	\item $R(L)$ is a pseudo-regular subspace of $(\KK\x\EE,\K{\cdot}{\cdot}_\rho)$;
	\item $T^\#T + \rho V^\#V$ has closed range. 
\end{enumerate}
\end{cor}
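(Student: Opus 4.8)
The plan is to establish the three-way equivalence by proving the chain (1)$\to$(2)$\to$(3)$\to$(1), leaning heavily on the lemmas just proved. The standing hypotheses are that $T^\#T+\rho V^\#V\in\mc{L}(\HH)^+$ and that $N(T)+N(V)$ is closed; by Lemma \ref{rango cdo} the latter is equivalent to $R(L)$ being a closed subspace of $(\KK\x\EE,\K{\cdot}{\cdot}_\rho)$. This closedness is what I will exploit repeatedly, since it lets me identify $R(L)^{\ort}$ with the Krein-orthogonal complement of a closed subspace and makes the interplay between $R(L)+R(L)^{\ort}$ and pseudo-regularity transparent.

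First I would dispense with (1)$\Leftrightarrow$(2). Condition (1) says precisely that $sm(\rho,z_0)\neq\varnothing$ for every $z_0$ in the closed subspace $V(N(L^\#L))^{\ort}$, which by Proposition \ref{cond nec z0} (recast as $\mc{A}=V(N(L^\#L))^{\ort}$) is exactly condition (1) of Proposition \ref{prop_admisibles_cerrado}. That proposition then gives that (1) is equivalent to $R(L)+R(L)^{\ort}$ being closed. Since we are already assuming $R(L)$ itself is closed (via the hypothesis on $N(T)+N(V)$ and Lemma \ref{rango cdo}), the statement ``$R(L)+R(L)^{\ort}$ is closed'' is by definition the statement that $R(L)$ is pseudo-regular. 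So (1)$\Leftrightarrow$(2) follows almost formally once the bookkeeping identifying $\mc A$ with $V(N(L^\#L))^{\ort}$ is in place.

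Next I would handle (2)$\Leftrightarrow$(3). Here the natural tool is Proposition \ref{preeliminares pseudo regular}: for an operator with closed range, the range is pseudo-regular if and only if $R(L^\#L)$ is closed. Applying this to $L$ (which has closed range by the standing hypothesis together with Lemma \ref{rango cdo}), $R(L)$ is pseudo-regular if and only if $R(L^\#L)$ is closed. Finally, recalling from the computation preceding Corollary \ref{cor_rhos} that $L^\#L=T^\#T+\rho V^\#V$, closedness of $R(L^\#L)$ is literally the statement that $T^\#T+\rho V^\#V$ has closed range, which is (3). Thus (2)$\Leftrightarrow$(3) is immediate from Proposition \ref{preeliminares pseudo regular} once the closed-range hypothesis on $L$ is verified.

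The step I expect to require the most care is not any single implication but the verification that $L$ genuinely has closed range so that Proposition \ref{preeliminares pseudo regular} applies, and the careful identification of $\mc A$ with $V(N(L^\#L))^{\ort}$ as a \emph{closed} subspace feeding into Proposition \ref{prop_admisibles_cerrado}. Both hinge on the hypothesis that $N(T)+N(V)$ is closed, routed through Lemma \ref{rango cdo}; I would make sure to invoke that lemma explicitly at the outset so that ``$R(L)$ is closed'' is available for every subsequent step. Given that, each implication reduces to citing an already-established result, and the corollary is genuinely ``immediate'' as advertised.
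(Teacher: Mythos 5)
Your proposal is correct and follows essentially the same route as the paper: invoke Lemma \ref{rango cdo} at the outset to get that $R(L)$ is closed, obtain (1)$\Leftrightarrow$(2) from Proposition \ref{prop_admisibles_cerrado} (after identifying condition (1) with $\mc{A}=V(N(L^\#L))^{\ort}$), and obtain (2)$\Leftrightarrow$(3) from Proposition \ref{preeliminares pseudo regular} together with $L^\#L=T^\#T+\rho V^\#V$. No gaps.
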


\begin{proof} 
First of all, note that Lemma \ref{rango cdo} implies that $R(L)$ is a closed subspace.
The equivalence between (1) and (2) then follows from Proposition \ref{prop_admisibles_cerrado}, and applying Proposition \ref{preeliminares pseudo regular} yields the equivalence between (2) and (3).
\end{proof}

\medskip

The following proposition describes necessary and sufficient conditions for the existence of smoothing splines to every element of $\EE$,
see \cite[Prop. 4.5]{GMMP10}.

\begin{prop}
Assume that $T^\#T+\rho V^\#V\in \mc{L}(\HH)^+$. Then, the following conditions are equivalent:
\begin{enumerate}
	\item $sm(\rho,z_0)\neq\varnothing$ for every $z_0\in \EE$;
	\item $R(L)$ is a regular subspace of $(\KK\x\EE,\K{\cdot}{\cdot}_\rho)$;
	\item $R(T^\#T + \rho V^\#V)=N(T)^\bot + N(V)^\bot$.
\end{enumerate}

In this case, if $\widetilde{x}\in sm(\rho,z_0)$ then
\[
sm(\rho,z_0)=\widetilde{x}+N(T)\cap N(V).
\]
\end{prop}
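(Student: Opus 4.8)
The plan is to route everything through two basic identities and then read conditions (1)--(3) as three statements about the single subspace $R(L^\#L)$. Throughout I use Proposition \ref{cond nec z0}, which (since $T^\#T+\rho V^\#V\in\mc{L}(\HH)^+$ is assumed) says $sm(\rho,z_0)\neq\varnothing$ exactly when $V^\#z_0\in R(L^\#L)$, together with Corollary \ref{ec normal}.

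First I would record the identities $R(L)^{\ort}=N(L^\#)$ and $R(L^\#)=N(T)^\bot+N(V)^\bot$. The first holds because $\HH$ is a Hilbert space: $(y,z)\in R(L)^{\ort}$ iff $\PI{x}{L^\#(y,z)}=0$ for all $x\in\HH$, i.e. iff $L^\#(y,z)=0$. For the second, $L^\#(y,z)=T^\#y+\rho V^\#z$ with $\rho\neq0$ gives $R(L^\#)=R(T^\#)+R(V^\#)$, and since $T,V$ are surjective they have closed range, so $R(T^\#)=N(T)^\bot$ and $R(V^\#)=N(V)^\bot$. I would also record the surjectivity consequence $R(T^\#T)=T^\#(R(T))=T^\#(\KK)=R(T^\#)=N(T)^\bot$, which is the structural input that makes the hard direction work.

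With these identities the equivalence (2)$\Leftrightarrow$(3) becomes purely algebraic. Regularity means $R(L)+N(L^\#)=\KK\x\EE$; for a fixed $(y,z)$ a representation $(y,z)=Lx+n$ with $n\in N(L^\#)$ exists iff $L^\#(y,z)\in R(L^\#L)$ (apply $L^\#$ for one direction; for the other, pick $x$ with $L^\#Lx=L^\#(y,z)$ and set $n=(y,z)-Lx$). Hence $R(L)$ is regular iff $R(L^\#)\subseteq R(L^\#L)$, and since $R(L^\#L)\subseteq R(L^\#)$ always holds, this is precisely (3). The implication (3)$\Rightarrow$(1) is then immediate, since (3) forces $N(V)^\bot=R(V^\#)\subseteq R(L^\#L)$, so $V^\#z_0\in R(L^\#L)$ for every $z_0\in\EE$.

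The \emph{main obstacle} is (1)$\Rightarrow$(3). Condition (1) says exactly $N(V)^\bot\subseteq R(L^\#L)$, and I must upgrade this to $N(T)^\bot+N(V)^\bot\subseteq R(L^\#L)$. The key is the identity $T^\#Tx=(T^\#T+\rho V^\#V)x-\rho V^\#Vx$: the first term lies in $R(L^\#L)$ by definition, while $V^\#Vx\in R(V^\#)=N(V)^\bot\subseteq R(L^\#L)$ by (1); as $R(L^\#L)$ is a subspace this yields $R(T^\#T)\subseteq R(L^\#L)$, and combined with $R(T^\#T)=N(T)^\bot$ it gives $N(T)^\bot\subseteq R(L^\#L)$. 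Adding the hypothesis $N(V)^\bot\subseteq R(L^\#L)$ and the reverse inclusion proves (3). Finally, for the description of $sm(\rho,z_0)$: regularity implies $R(L)$ is nondegenerate, so for $x\in N(L^\#L)$ one has $Lx\in R(L)\cap R(L)^{\ort}=R(L)^\circ=\{0\}$, whence $N(L^\#L)=N(L)=N(T)\cap N(V)$; substituting into $sm(\rho,z_0)=\widetilde{x}+N(L^\#L)$ from Corollary \ref{ec normal} gives the stated form.
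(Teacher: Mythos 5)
Your proof is correct and follows essentially the same route as the paper: both arguments reduce everything to the identity $R(L^\#)=N(T)^\bot+N(V)^\bot$ and the image/preimage computations under $L^\#$, with the key step in (1)$\Rightarrow$(3) being exactly the paper's (implicit) use of $T^\#T=L^\#L-\rho V^\#V$ together with $R(T^\#T)=N(T)^\bot$. You additionally spell out the proof of the final description $sm(\rho,z_0)=\widetilde{x}+N(T)\cap N(V)$ via nondegeneracy of $R(L)$, which the paper states but does not argue explicitly.
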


\begin{proof}
(1)$\to$(3):\ If $sm(\rho,z_0)\neq\varnothing$ for every $z_0\in \EE$, then $(V^\#)^{-1}(R(L^\#L))=\EE$. This implies that $R(V^\#V)=R(V^\#)\subseteq R(L^\#L)$. Therefore, $R(L^\#L)=R(T^\#T) + R(V^\#V)=N(T)^\bot + N(V)^\bot$.
\smallskip

\noi (3)$\to$(2):\ If $R(L^\#L)=N(T)^\bot + N(V)^\bot=R(L^\#)$, taking the counterimage by $L^\#$ to both sides of the equality gives
\[
\HH=(L^\#)^{-1}(R(L^\#L))= R(L) + N(L^\#)=R(L) + R(L)^{\ort}.
\]
Therefore, $R(L)$ is regular.
\smallskip

\noi (2)$\to$(1):\ If $R(L) + R(L)^{\ort}=\HH$, applying $L^\#$ to both sides of the equality we get $R(L^\#L)=R(L^\#)=N(T)^\bot + N(V)^\bot$. Then,
\[
(V^\#)^{-1}(R(L^\#L))\supseteq  (V^\#)^{-1}(N(V)^\bot)=\EE. 
\]
Therefore,  $sm(\rho,z_0)\neq\varnothing$ for every $z_0\in \EE$.
\end{proof}

\medskip

\section{Relationship with the linearly constrained indefinite abstract interpolation problem}\label{semidef}

In this section a particular version of the constrained interpolation problem formulated in \eqref{QCQP} is analyzed, one which is closely related to the smoothing problem that arises from its
regularization. If the quadratic constraint in \eqref{QCQP} is replaced by a linear constraint, then Problem \ref{QCQP} translates into the following:

\begin{prob}\label{pb splines}
Given $z_0\in \EE$,
\begin{equation*}
\textit{\normalfont{minimize }}\K{Tx}{Tx},\textit{ \normalfont{subject to} }Vx=z_0,
\end{equation*}
and if the minimum exists, find the set of arguments at which it is attained.
\end{prob}

Note that Problem \ref{QCQP} in fact reduces to this problem if $V^\#V$ is  (positive or negative) semidefinite.

This problem was already considered in \cite{GMMP10} where, under some hypotheses, it was proved that the set of solutions to Problem \ref{pb splines} for a given $z_0\in\EE$ coincides with
$sm(\rho,z_0')$ for another $z_0'\in\EE$. Hereafter, we propose a deeper insight in this relationship. In particular, we study situations in which
the solutions to Problem \ref{pb splines} are contained in a set of $(T,V,\rho)$-smoothing splines, and the cases when Problem \ref{pb splines} can be translated into an abstract indefinite smoothing problem and viceversa.

\begin{Def}
Any element $\tilde{x}\in \HH$ that is a solution to Problem \ref{pb splines} is called a \emph{$(T,V)$-interpolating spline} to $z_0\in\EE$. 
The set of $(T,V)$-interpolating splines to $z_0$ is denoted by $sp(T,V,z_0)$.
\end{Def}

Since $T$ and $V$ are fixed along this work, $sp(T,V,z_0)$ is shortened to $sp(z_0)$.
\smallskip

Necessary and sufficient conditions for the existence of solutions to Problem \ref{pb splines} are given in the following proposition, which is an immediate consequence of \cite[Lemma 3.4]{GMMP10}.

\begin{prop}\label{prop sp no vacio}
Assume that $T(N(V))$ is nonnegative and let $z_0\in\EE$. Then, the following conditions are equivalent:
\begin{enumerate}
\item $sp(z_0)\neq\varnothing$;
\item $z_0\in V\parentesis{T^\#T(N(V))^\bot}$.
\end{enumerate}
In this case, $sp(z_0)$ is an affine manifold parallel to the subspace 
\[
\N_0:= N(V) \cap T^\#T(N(V))^\bot.
\]
\end{prop}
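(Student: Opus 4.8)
The plan is to reduce Problem \ref{pb splines} to a convex quadratic minimization over an affine subspace and to read off both the existence criterion and the description of the solution set from the first-order optimality condition. Since $V$ is surjective, I fix $x_0\in\HH$ with $Vx_0=z_0$, so the feasible set is the affine manifold $x_0+N(V)$. Writing $A:=T^\#T\in\mc{L}(\HH)$, the objective becomes $x\mapsto\PI{Ax}{x}$, because $\K{Tx}{Tx}_\KK=\PI{x}{T^\#Tx}=\PI{Ax}{x}$. The first thing I would record is that $A$ is selfadjoint on the Hilbert space $\HH$; this follows from the defining relation of $T^\#$ together with the Hermitian symmetry of $\K{\cdot}{\cdot}_\KK$, and it is the fact that drives the whole argument.

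Next I would characterize the minimizers. Substituting $x=\tilde x+u$ with $u\in N(V)$ and expanding gives $\PI{A(\tilde x+u)}{\tilde x+u}=\PI{A\tilde x}{\tilde x}+2\real\PI{A\tilde x}{u}+\PI{Au}{u}$. The hypothesis that $T(N(V))$ is nonnegative, i.e. $T(N(V))\subseteq\PK$, says exactly that $\PI{Au}{u}=\K{Tu}{Tu}_\KK\ge 0$ for all $u\in N(V)$, so the quadratic term is nonnegative along feasible directions. From this I claim that $\tilde x\in x_0+N(V)$ is a minimizer if and only if $A\tilde x\in N(V)^\bot$: the ``if'' direction is immediate from the displayed expansion, since the cross term vanishes and the remaining term is $\ge0$, while the ``only if'' direction follows by the standard variational argument, testing the minimality inequality along $\tilde x+tu$ and $\tilde x+tiu$ for $t\in\RR$ and $u\in N(V)$ to force $\PI{A\tilde x}{u}=0$.

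Then I would convert this stationarity condition into statement (2) using selfadjointness of $A$. For every $u\in N(V)$ one has $\PI{A\tilde x}{u}=\PI{\tilde x}{Au}$, so $A\tilde x\in N(V)^\bot$ is equivalent to $\tilde x\in(A(N(V)))^\bot=T^\#T(N(V))^\bot$. Hence $sp(z_0)\neq\varnothing$ holds precisely when there is some $\tilde x\in T^\#T(N(V))^\bot$ with $V\tilde x=z_0$, that is, when $z_0\in V\bigl(T^\#T(N(V))^\bot\bigr)$; this yields the equivalence (1)$\Leftrightarrow$(2). Finally, for a fixed particular solution $\tilde x$, a feasible point $x=\tilde x+u$ (with $u\in N(V)$) is again a solution if and only if $A(\tilde x+u)\in N(V)^\bot$, and since $A\tilde x\in N(V)^\bot$ this reduces to $Au\in N(V)^\bot$, i.e. $u\in(A(N(V)))^\bot$; intersecting with $u\in N(V)$ gives $u\in\N_0$, so $sp(z_0)=\tilde x+\N_0$.

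The main obstacle, and really the only delicate point, is justifying that a first-order stationary point is a genuine global minimizer; this is where the nonnegativity assumption $T(N(V))\subseteq\PK$ is indispensable, since without it the objective is indefinite along $N(V)$ and stationary points need not minimize. Everything else is bookkeeping with the Hilbert adjoint of $A=T^\#T$, and the argument reproduces the statement inherited from \cite[Lemma 3.4]{GMMP10}.
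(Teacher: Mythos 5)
Your proof is correct. The paper offers no argument of its own here --- it presents the proposition as an immediate consequence of \cite[Lemma 3.4]{GMMP10} --- and your variational argument (expand the quadratic along the feasible affine manifold $x_0+N(V)$, use nonnegativity of $T(N(V))$ to show that the first-order condition $T^\#T\tilde x\perp N(V)$ is both necessary and sufficient for global minimality, then translate it via selfadjointness of $T^\#T$ into $\tilde x\in T^\#T(N(V))^\bot$) is exactly the standard reasoning behind that cited lemma, including the correct identification of where the hypothesis $T(N(V))\subseteq\PK$ is used. The description of $sp(z_0)$ as $\tilde x+\N_0$ also follows correctly from your characterization of the solutions.
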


\begin{obs}\label{obs_sol_sp}
Note that \cite[Lemma 3.4]{GMMP10} assures that, given $z_0\in\EE$, $x_0\in sp(z_0)$ if and only if $Vx_0=z_0$ and $x_0\in T^\#T(N(V))^\bot$.
\end{obs}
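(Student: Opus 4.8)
The plan is to unwind the definition of $sp(z_0)$ directly and reduce the optimality condition to an orthogonality relation, making use of the standing hypothesis that $T(N(V))$ is nonnegative. Since $V$ is surjective, the constraint $Vx=z_0$ admits at least one solution, and if $Vx_0=z_0$ then the feasible set of Problem \ref{pb splines} is exactly the affine manifold $x_0+N(V)$. Thus $x_0\in sp(z_0)$ means precisely that $Vx_0=z_0$ together with $\K{Tx_0}{Tx_0}\leq\K{T(x_0+n)}{T(x_0+n)}$ for every $n\in N(V)$. Because the feasibility condition $Vx_0=z_0$ is common to both sides of the asserted equivalence, the genuine task is to show that, among feasible points, optimality is equivalent to $x_0\in (T^\#T(N(V)))^\bot$.

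First I would expand, for $n\in N(V)$, using the Hermitian symmetry of $\K{\cdot}{\cdot}_\KK$,
\begin{equation*}
\K{T(x_0+n)}{T(x_0+n)}_\KK=\K{Tx_0}{Tx_0}_\KK+2\real\K{Tx_0}{Tn}_\KK+\K{Tn}{Tn}_\KK.
\end{equation*}
Recalling the adjoint identity $\K{Tx_0}{Tn}_\KK=\PI{x_0}{T^\#Tn}_\HH$, the point $x_0$ is optimal over $x_0+N(V)$ if and only if
\begin{equation*}
2\real\PI{x_0}{T^\#Tn}_\HH+\K{Tn}{Tn}_\KK\geq0\qquad\text{for every }n\in N(V).
\end{equation*}
Note that $x_0\in (T^\#T(N(V)))^\bot$ is exactly the assertion that $\PI{x_0}{T^\#Tn}_\HH=0$ for all $n\in N(V)$, so what remains is to match this orthogonality with the displayed inequality.

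For the sufficiency direction, if $x_0\in (T^\#T(N(V)))^\bot$ then the linear term vanishes for every $n\in N(V)$ and the inequality collapses to $\K{Tn}{Tn}_\KK\geq0$, which holds precisely because $T(N(V))$ is nonnegative; hence $x_0$ is optimal. This is the only place the nonnegativity hypothesis is invoked, and it is indispensable here. For the necessity direction I would exploit that $N(V)$ is a subspace: fixing $n\in N(V)$ and replacing it by $tn$ with $t\in\RR$ gives $2t\real\PI{x_0}{T^\#Tn}_\HH+t^2\K{Tn}{Tn}_\KK\geq0$ for all real $t$; since $t=0$ is then a global minimum of this smooth function of $t$, its derivative there must vanish, forcing $\real\PI{x_0}{T^\#Tn}_\HH=0$, and applying the same reasoning to $in$ eliminates the imaginary part, so $\PI{x_0}{T^\#Tn}_\HH=0$ for every $n\in N(V)$, that is, $x_0\in (T^\#T(N(V)))^\bot$.

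This is essentially a first-order optimality analysis for an indefinite quadratic form restricted to an affine manifold, so I expect no serious obstacle; the only points requiring care are the complex-scalar bookkeeping (separating real and imaginary parts by testing against both $n$ and $in$) and keeping track of exactly where the nonnegativity of $T(N(V))$ enters, namely only in the sufficiency direction, the necessity direction being valid without it. Alternatively the conclusion can be read off from \cite[Lemma 3.4]{GMMP10}, but the self-contained variational derivation above makes the role of each hypothesis transparent.
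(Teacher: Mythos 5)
Your argument is correct. Note, however, that the paper offers no proof at all here: the remark is a pure citation of \cite[Lemma 3.4]{GMMP10}, so your self-contained variational derivation is a genuinely different (and more informative) route. The reduction of $x_0\in sp(z_0)$ to the inequality $2\real\PI{x_0}{T^\#Tn}+\K{Tn}{Tn}_\KK\geq0$ for all $n\in N(V)$ is exactly right, the identity $\K{Tx_0}{Tn}_\KK=\PI{x_0}{T^\#Tn}$ is the correct use of the Krein-space adjoint (here $\HH$ is Hilbert, so the right-hand side is the Hilbert inner product), and the scaling trick with $tn$ and $in$ correctly handles the complex bookkeeping, since $\K{T(in)}{T(in)}=\K{Tn}{Tn}$ leaves the quadratic term intact while rotating the linear one. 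Two small points are worth making explicit. First, the remark sits under the standing hypothesis of Proposition \ref{prop sp no vacio} that $T(N(V))$ is nonnegative; your proof correctly isolates this as the only ingredient needed for sufficiency, and your observation that necessity (optimality implies $x_0\in T^\#T(N(V))^\bot$) holds without any nonnegativity is a mild strengthening not visible from the citation alone. Second, what your approach buys over the paper's is transparency and independence from \cite{GMMP10}; what it costs is nothing, since the first-order analysis is elementary. The only caveat is cosmetic: in an indefinite setting one cannot speak of ``first-order optimality'' via convexity, but you never do---you use only that $t\mapsto f(t)$ is a real quadratic, nonnegative on all of $\RR$, vanishing at $t=0$, whence $f'(0)=0$, which is airtight.
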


In \cite[Prop. 3.8]{GMMP10} necessary and sufficient conditions for the existence of interpolating splines to every vector of the space were provided:
$sp(z_0)\neq\varnothing$ for every $z_0\in\EE$ if and only if $T(N(V))$ is a (closed) uniformly positive subspace of $\KK$. This is the case if
$R(L)$ is a (positive) regular subspace
of $(\KK\x\EE,\K{\cdot}{\cdot}_\rho)$, see \cite[Thm. 4.7]{GMMP10}. Also, several properties of the subspace $T(N(V))$ were analyzed in order to characterize the interpolating splines. 

In the following we proceed to study a more general case.
With this aim, we first describe the relationship between the subspaces $R(L)$ and $T(N(V))$.


 \begin{lema}\label{props_tnv}
The following conditions hold:
\begin{enumerate}
	\item $T(N(V))^\circ=T(\N_0)$. Moreover, 
	\begin{equation}\label{tiso}
	T(N(V))= T(\N_0) \sdo\ T(N(V)\ominus \N_0);
	\end{equation}
	\item $T(N(V))$ is nondegenerate if and only if $\N_0=N(V)\cap N(T)$;
	\item $T(N(V))$ is pseudo-regular if and only if $N(V) + T^\#T(N(V))^\bot$ is a closed subspace of $\HH$.	
	In this case, $T(N(V)\ominus \N_0)$ is regular;
	\item $T(N(V))$ is regular if and only if $\HH=N(V) + T^\#T(N(V))^\bot$.
\end{enumerate}
\end{lema}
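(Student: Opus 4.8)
The plan is to reduce all four assertions to the study of a single auxiliary operator together with the self-adjoint compression it produces. Write $\M := T(N(V)) \subseteq \KK$, let $P$ be the orthogonal projection of $\HH$ onto $N(V)$, and set $T_0 := T|_{N(V)} \in \mc{L}(N(V),\KK)$, so that $R(T_0) = \M$. Since $\K{T_0 u}{y}_\KK = \PI{u}{T^\# y}_\HH$ for $u \in N(V)$, its Krein-space adjoint is $T_0^\# = P\,T^\#$, and the self-adjoint operator $C := T_0^\# T_0 = P\,(T^\# T)|_{N(V)}$ on the Hilbert space $N(V)$ will govern everything. The single computation underlying all items is the identity $\K{Tu}{Tw}_\KK = \PI{u}{T^\# T w}_\HH$; restricting to $u,w \in N(V)$ it shows at once that $\N_0 = N(V) \cap (T^\# T(N(V)))^\bot = \{u \in N(V) : T^\# T u \in N(V)^\bot\} = N(C)$.

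For (1) I would first prove $\M^\circ = T(\N_0)$. A vector $Tu$ with $u \in N(V)$ lies in $\M^\ort$ exactly when $\K{Tu}{Tw}_\KK = 0$ for all $w \in N(V)$, which by this identity is precisely $u \in (T^\# T(N(V)))^\bot$, i.e. $u \in \N_0$; hence $\M^\circ = \M \cap \M^\ort = T(\N_0)$. As $\N_0$ is closed, $N(V) = \N_0 \oplus (N(V)\ominus\N_0)$, so $\M = T(\N_0) + T(N(V)\ominus\N_0)$; orthogonality of the summands is immediate from $T(\N_0) = \M^\circ \subseteq \M^\ort$, and directness follows because $Tw \in \M^\ort$ with $w\in N(V)$ forces $w \in \N_0$. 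This gives \eqref{tiso}. Item (2) is then immediate: $\M$ is nondegenerate iff $\M^\circ = T(\N_0) = \{0\}$ iff $\N_0 \subseteq N(T)$, and since $N(V)\cap N(T) \subseteq \N_0$ always holds, this is equivalent to $\N_0 = N(V)\cap N(T)$.

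For (3) I would apply Proposition \ref{preeliminares pseudo regular} to $T_0$: its range $\M$ is pseudo-regular iff $R(C) = R(T_0^\# T_0)$ is closed. It remains to identify this with the stated condition. Because $T$ is surjective, $T^\#$ is injective with closed range, hence bounded below, so $T^\# T(N(V)) = T^\#(\M)$ is closed; writing $C = P\circ (T^\#T)|_{N(V)}$ as a product of closed-range operators, Proposition \ref{R(AB)} shows $R(C)$ is closed iff $T^\# T(N(V)) + N(V)^\bot$ is closed, and Deutsch's lemma (\cite[Lemma 11]{Deutsch}) turns this into the closedness of $N(V) + T^\# T(N(V))^\bot$. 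When $\M$ is pseudo-regular, the decomposition \eqref{desc} together with \eqref{tiso} forces $T(N(V)\ominus\N_0)$ to be a closed complement of $\M^\circ$ in $\M$, hence regular; its closedness follows since $T_0$ is bounded below on $(N(T_0))^\bot \supseteq N(V)\ominus\N_0$.

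For (4) I would use that $\M$ is regular iff $\M + \M^\ort = \KK$ and run the counterimage argument from the regularity proposition at the end of Section~3, but with $T_0$ in place of $L$: applying $T_0^\#$ to $\M + \M^\ort = \KK$ shows regularity is equivalent to $R(C) = R(T_0^\#) = (N(V)\cap N(T))^\bot$ (the orthocomplement taken in $N(V)$), and conversely this equality pulls back to $\M + \M^\ort = \KK$. Translating $R(C) = (N(V)\cap N(T))^\bot$ into a statement on $\HH$ via the same $P$-projection bookkeeping used in (3) gives $\HH = N(V) + T^\# T(N(V))^\bot$. The main obstacle throughout is the closedness of $\M = T(N(V))$: Proposition \ref{preeliminares pseudo regular} requires $R(T_0)$ to be closed, and this closedness is controlled by $T^{*}T$ rather than by the indefinite $T^\# T$ appearing in the statement, so it must be supplied by the standing hypothesis that $N(T)+N(V)$ is closed (equivalently, by Lemma \ref{rango cdo}, that $T(N(V))$ is closed); I would make this assumption explicit before invoking the product-of-ranges results.
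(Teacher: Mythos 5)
Your items (1) and (2) are correct and essentially coincide with the paper's argument: both rest on the identity $T^{-1}(T(N(V))^{\ort})=T^\#T(N(V))^\bot$, whether one phrases it through $\K{Tu}{Tw}_\KK=\PI{u}{T^\#Tw}_\HH$ or through \eqref{preimag}, and your directness argument for \eqref{tiso} is if anything a little cleaner. The genuine gap is in item (3). Your route --- apply Proposition \ref{preeliminares pseudo regular} to $T_0=T|_{N(V)}$ and then identify $R(T_0^\#T_0)$ via Proposition \ref{R(AB)} and Deutsch's lemma --- needs $R(T_0)=T(N(V))$ to be closed at two distinct points: to invoke Proposition \ref{preeliminares pseudo regular} at all, and to know that $T^\#T(N(V))=T^\#(T(N(V)))$ is closed before Proposition \ref{R(AB)} can be applied to $C=P\,(T^\#T)|_{N(V)}$. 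You acknowledge this and propose to add the hypothesis that $N(T)+N(V)$ is closed; but that hypothesis is not part of Lemma \ref{props_tnv} and is not a standing assumption of the paper (Corollary \ref{todo admisible} lists it explicitly precisely because it is not assumed globally, and Lemma \ref{props_tnv} is used downstream, e.g.\ via Lemma \ref{lema_tnv_no_degenerado} and Proposition \ref{prop_inclusion}, without it). Nor can the missing closedness be extracted from the condition ``$N(V)+T^\#T(N(V))^\bot$ closed'': since $N(T)\subseteq T^\#T(N(V))^\bot$ always, that sum can be closed (even all of $\HH$) while $N(T)+N(V)$ is not. So as written you prove a strictly weaker statement than item (3); the same defect infects your ``bounded below on $N(T_0)^\bot$'' justification of the regularity of $T(N(V)\ominus\N_0)$.

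The paper avoids this by never restricting $T$: it works with the full surjective operator, for which $T^\dag$ is bounded, establishes $T^{-1}\bigl(T(N(V))+T(N(V))^{\ort}\bigr)=N(V)+T^\#T(N(V))^\bot$, and proves the hard (converse) implication of (3) by a direct sequence argument: for $y_n\to y$ in $T(N(V))+T(N(V))^{\ort}$ one has $P_{N(T)^\bot}x_n=T^\dag y_n\to T^\dag y$, and closedness of $P_{N(T)^\bot}\bigl(N(V)+T^\#T(N(V))^\bot\bigr)$ (which follows since $N(T)\subseteq T^\#T(N(V))^\bot$) finishes the argument; the regularity of $T(N(V)\ominus\N_0)$ is then obtained from Proposition \ref{R(AB)} applied to the closedness of $(N(V)\ominus\N_0)+N(T)$, not from a lower bound for $T_0$. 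This is exactly the step your compression to $N(V)$ assumes away rather than replaces. Your sketch of (4) is sound in outline (regularity forces $T(N(V))$ closed in the forward direction, and the paper's converse simply applies the surjective $T$ to $\HH=N(V)+T^\#T(N(V))^\bot$), but the ``translation'' from $R(C)=R(T_0^\#)$ back to a statement in $\HH$ is left unproved and is where the same bookkeeping must actually be carried out. To repair the proposal, replace the appeal to Proposition \ref{preeliminares pseudo regular} in (3) by the preimage-plus-sequence argument for the full operator $T$.
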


\begin{proof}
Most of the conditions in the statement can be derived from
\begin{equation}\label{idayvuelta}
	T^\#T(N(V))^\bot=T^{-1}(T(N(V))^{\ort}),
\end{equation}
which is a consequence of \eqref{preimag} in the Preliminaries.
\smallskip

\noi (1)\ \ If $y\in T(N(V))^\circ$, there exists $x\in N(V)$ such that $y=Tx\in T(N(V)^{\ort})$. So, \eqref{idayvuelta} implies that
$x\in \N_0$ and $y=Tx\in T(\N_0)$. The other inclusion is also a consequence of \eqref{idayvuelta}.

Also, since $N(V)=\N_0\oplus (N(V)\ominus \N_0)$ it is easy to see that $T(N(V))=T(\N_0) + T(N(V)\ominus \N_0)$. Moreover, these subspaces are $\K{\cdot}{\cdot}_\KK$-orthogonal because
$T(\N_0)\subseteq T(N(V))^{\ort}$ and $T(N(V)\ominus \N_0)\subseteq T(N(V))$. It remains to prove that the sum is direct, but if $y\in T(\N_0)\cap T(N(V)\ominus \N_0)$ then there exist
$x_1\in \N_0$ and $x_2\in N(V)\ominus \N_0$ such that $Tx_1=y=Tx_2$. So, $x_2-x_1\in N(V)\cap N(T)\subseteq \N_0$ and $x_2=(x_2-x_1) + x_1\in \N_0\cap (N(V)\ominus \N_0)=\{0\}$. Thus,
$T(N(V))=T(\N_0) \sdo\ T(N(V)\ominus \N_0)$.
\smallskip

\noi (2)\ \ Observe that $N(V)\cap N(T)\subseteq \N_0$. Also, $T(N(V))$ is nondegenerate if and only if $T(\N_0)=\{0\}$, or equivalently, if $\N_0\subseteq N(T)$. This completes
the proof of the assertion.
\smallskip

\noi (3)\ \ Assume that $T(N(V)) + T(N(V))^{\ort}$ is closed in $\KK$. Since $T$ is bounded, 
\begin{align}\label{preim}
T^{-1}(T(N(V)) + T(N(V))^{\ort})&= T^{-1}(T(N(V))) + T^{-1}(T(N(V))^{\ort})=\nonumber\\
&=(N(V) + N(T)) + T^\#T(N(V))^\bot\nonumber\\
&= N(V) + T^\#T(N(V))^\bot
\end{align}
is also a closed subspace. 

Conversely, assume that $N(V) + T^\#T(N(V))^\bot$ is closed in $\HH$. Let $(y_n)_{n\in\NN}$ be a sequence in $T(N(V)) + T(N(V))^{\ort}$ such that $y_n\ra y$ for some $y\in\KK$. Since $T$ is
surjective, there exists $x\in N(T)^\bot$ such that $Tx=y$. Also, by \eqref{preim}, for each $n\geq 1$ there exists $x_n\in N(V) + T^\#T(N(V))^\bot$ such that $Tx_n=y_n$. Since $T^\dag$ is bounded,
\begin{equation}\label{lala}
P_{N(T)^\bot}x_n=T^\dag Tx_n =T^\dag y_n \ra T^\dag y=T^\dag Tx= x.
\end{equation}
Since $N(T)\subseteq T^\#T(N(V))^\bot$ it is immediate that $N(T) + (N(V) + T^\#T(N(V))^\bot)$ is a closed subspace. Equivalently, $P_{N(T)^\bot} (N(V) + T^\#T(N(V))^\bot)$ is also closed in $\HH$. By
\eqref{lala}, it follows that $x\in P_{N(T)^\bot} (N(V) + T^\#T(N(V))^\bot)$. So, there exists $z\in N(V) + T^\#T(N(V))^\bot$ such that $x= P_{N(T)^\bot} z$. Thus,
$y=Tx=TP_{N(T)^\bot} z=Tz\in T(N(V)) + T(N(V))^{\ort}$, see \eqref{preim}. Therefore, $T(N(V)) + T(N(V))^{\ort}$ is closed in $\KK$.

It remains to show that if $T(N(V))$ is pseudo-regular then $T(N(V)\ominus \N_0)$ is regular. 
Since $T(\N_0)=T(N(V))^\circ$ and  \eqref{tiso},
it suffices to prove that $T(N(V)\ominus \N_0)$ is closed. 

By Proposition \ref{R(AB)}, $T(N(V)\ominus \N_0)$ is closed if and only if $(N(V)\ominus \N_0) + N(T)$ is closed. By \eqref{preim},
$N(V) + T^\#T(N(V))^\bot=(N(V)\ominus \N_0) \oplus T^\#T(N(V))^\bot$ is closed. Since $N(T)\subseteq T^\#T(N(V))^\bot$, then
$(N(V)\ominus \N_0) + N(T)$ is also closed (and regular). Thus, the proof is complete.

\noi (4)\ \ Assume that $T(N(V))$ is regular. Then, \eqref{preim} implies that
\begin{align*}
\HH & = T^{-1}(\KK)=T^{-1}(T(N(V)) + T(N(V))^{\ort})= N(V) + T^\#T(N(V))^\bot.
\end{align*}
Conversely, assume that $\HH=N(V) + T^\#T(N(V))^\bot$. Since $T$ is surjective, 
\begin{align*}
\KK &=T(N(V) + T^\#T(N(V))^\bot)= T(N(V)) + T(T^{-1}(T(N(V))^{\ort}))\nonumber\\
&=T(N(V)) + T(N(V))^{\ort},
\end{align*}
i.e. $T(N(V))$ is regular.
\end{proof}

The following lemma and proposition continue describing the relationship between the subspaces $R(L)$ and $T(N(V))$.

\begin{lema}\label{lema_tnv_no_degenerado} If $R(L)$ is a (closed) nonnegative subspace of $(\KK\x\EE,\K{\cdot}{\cdot}_\rho)$ then $T(N(V))$ is a (closed) positive subspace of $\KK$. In particular,  $T(N(V))$ is nondegenerate.
\end{lema}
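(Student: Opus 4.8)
The plan is to exploit the fact that for $x\in N(V)$ the image $Lx=(Tx,0)$ has trivial $\EE$-component, so that the $\rho$-inner product collapses onto the $\KK$-factor. Concretely, for such $x$ one has $\K{Lx}{Lx}_\rho=\K{Tx}{Tx}_\KK$, and therefore the hypothesis that $R(L)$ is nonnegative in $(\KK\x\EE,\K{\cdot}{\cdot}_\rho)$ immediately forces $\K{Tx}{Tx}_\KK\geq 0$ for every $x\in N(V)$. This already shows that $T(N(V))$ is a nonnegative subspace of $\KK$; the real content of the statement is the upgrade from nonnegative to positive.

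For that upgrade I would take a neutral vector of $T(N(V))$, say $y=Tx$ with $x\in N(V)$ and $\K{Tx}{Tx}_\KK=0$, and argue that $y=0$. The key tool is the elementary fact that in a nonnegative subspace the form is positive semidefinite, so the Cauchy--Schwarz inequality holds and a neutral vector is automatically isotropic. Applying this to $Lx$ in $R(L)$ gives $Lx\in R(L)^{\ort}$, i.e. $\K{Lx}{Lx'}_\rho=0$ for every $x'\in\HH$. Since $Lx=(Tx,0)$, this identity reduces to $\K{Tx}{Tx'}_\KK=0$ for all $x'$. Now surjectivity of $T$ lets $Tx'$ range over all of $\KK$, and nondegeneracy of the Krein space $\KK$ then yields $Tx=0$. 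Hence the only neutral vector of $T(N(V))$ is $0$, which proves positivity and, in particular, nondegeneracy.

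The parenthetical ``closed'' part of the statement requires no separate work: Lemma \ref{rango cdo} asserts precisely that $R(L)$ is closed in $(\KK\x\EE,\K{\cdot}{\cdot}_\rho)$ if and only if $T(N(V))$ is closed in $\KK$, so closedness of $R(L)$ transfers directly. The main obstacle I anticipate is not any computation but recognizing the correct mechanism for the passage from neutral to zero: the decisive point is that restricting $L$ to $N(V)$ annihilates the $\EE$-coordinate, after which the positivity of $T(N(V))$ is governed entirely by the nondegeneracy of $\KK$ together with the surjectivity of $T$. An equally valid alternative to invoking the isotropy of neutral vectors would be to expand $\K{Lx+\lambda Lx'}{Lx+\lambda Lx'}_\rho\geq 0$ directly and let $\lambda\to 0$, which forces $\K{Tx}{Tx'}_\KK=0$ by the same reasoning.
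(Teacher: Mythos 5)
Your proof is correct and follows essentially the same route as the paper: nonnegativity of $T(N(V))$ comes from $L(N(V))=T(N(V))\times\{0\}\subseteq R(L)$, closedness from Lemma \ref{rango cdo}, and a neutral vector $Tx$ is killed by the semidefiniteness of the form on $R(L)$ together with the surjectivity of $T$. The only cosmetic difference is that the paper packages the Cauchy--Schwarz step as ``$\langle L^\#Lx,x\rangle=0$ and $L^\#L\geq 0$ imply $T^\#Tx=L^\#Lx=0$, hence $Tx=0$ by injectivity of $T^\#$,'' which is exactly your ``$\K{Tx}{Tx'}_\KK=0$ for all $x'$ plus nondegeneracy of $\KK$'' in operator form.
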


\begin{proof} Assume that  $R(L)$ is a closed nonnegative subspace of $(\KK\x\EE,\K{\cdot}{\cdot}_\rho)$. 
By Lemma \ref{rango cdo}, $T(N(V))$ is a closed subspace of $\KK$. Also, since $T(N(V))\x\{0\}=L(N(V))$ is a subspace of $R(L)$,
it follows that $T(N(V))$ is nonnegative.

In order to prove that $T(N(V))$ is positive, it is sufficient to show that $T(N(V))$ is nondegenerate. So, let $x\in N(V)$ such that $Tx\in T(N(V))^\circ$. Since $x\in N(V)$, 
\[
0=\K{Tx}{Tx}=\PI{Lx}{Lx}.
\]
Since $L^\#L\in L(\HH)^+$ (see Proposition \ref{prop cono positivo}) it follows that $T^\#Tx=L^\#Lx=0$. Finally, the surjectivity of $T$ implies the injectivity of $T^\#$ and, in particular, $Tx=0$. 
\end{proof}

\begin{prop}\label{lema relacion entre R(L) y T(N(V))}
If $R(L)$ is a nonnegative pseudo-regular subspace of $(\KK\x\EE,\K{\cdot}{\cdot}_\rho)$ then $T(N(V))$ is a (closed) uniformly positive subspace of $\KK$. 
\end{prop}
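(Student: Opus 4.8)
The plan is to read ``uniformly positive'' off Proposition \ref{preeliminares regular}: it suffices to prove that $T(N(V))$ is regular and nonnegative. Nonnegativity (in fact positivity) will come for free, so the whole difficulty is regularity. Throughout I write $A:=T^\#T+\rho V^\#V=L^\#L$. First I would extract the consequences of pseudo-regularity. Since a pseudo-regular subspace is closed (as in the decomposition \eqref{desc}), $R(L)$ is closed, so Lemma \ref{rango cdo} gives that $T(N(V))$ is closed and Lemma \ref{lema_tnv_no_degenerado} gives that it is positive, hence nondegenerate. Moreover, since $R(L)$ is closed, Proposition \ref{preeliminares pseudo regular} rephrases the pseudo-regularity of $R(L)$ as the closedness of $R(A)$.

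The core step is to prove that $N(V)+N(A)$ is closed. The key observation is the identity $A(N(V))=T^\#T(N(V))$, valid because the summand $\rho V^\#V$ annihilates $N(V)$. Since $T$ is surjective with closed range, $T^\#$ is injective and bounded below, hence it carries the closed subspace $T(N(V))$ (closed by the previous step) onto a closed subspace; thus $T^\#T(N(V))$ is closed. Because $A$ is self-adjoint with closed range, $\HH=N(A)\oplus R(A)$ and $A|_{R(A)}$ is a homeomorphism of $R(A)$, so $P_{R(A)}(N(V))=(A|_{R(A)})^{-1}\big(A(N(V))\big)$ is closed; this is exactly the condition for $N(V)+N(A)$ to be closed.

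Finally I would upgrade this to uniform positivity through a minimal-gap argument inside $R(L)$. Using nonnegativity and pseudo-regularity, decompose $R(L)=R(L)^\circ\sdo\mc{R}$ with $\mc{R}$ uniformly positive (Proposition \ref{preeliminares regular}), and recall from \eqref{parte isotropica} that $R(L)^\circ=L(N(A))$. Then $L(N(V))+R(L)^\circ=L(N(V)+N(A))$ is the image under the closed-range operator $L$ of a closed subspace, hence closed, while $L(N(V))\cap R(L)^\circ=\{0\}$ since $T(N(V))$ is positive. It is standard that two closed subspaces with trivial intersection and closed sum subtend a positive gap, so $\operatorname{dist}(v,R(L)^\circ)\ge\delta\|v\|$ for every $v\in L(N(V))$; writing $v=v_0+v_1$ along $R(L)^\circ\sdo\mc{R}$ one gets $\K{v}{v}_\rho=\K{v_1}{v_1}_\rho\ge\beta\|v_1\|^2\ge\beta\delta^2\|v\|^2$, so $L(N(V))$, and therefore $T(N(V))$, is uniformly positive.

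The main obstacle is the closedness of $N(V)+N(A)$: when $R(L)$ is degenerate the kernel $N(A)$ can be strictly larger than $N(L)=N(T)\cap N(V)$, so one cannot simply quotient by $N(L)$ and reduce to an injective situation. The identity $A(N(V))=T^\#T(N(V))$ together with the closedness of $R(A)$ is precisely what tames this, and it is also the one point where pseudo-regularity of $R(L)$ (rather than mere nonnegativity) is genuinely used.
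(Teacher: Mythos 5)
Your proof is correct, but it takes a genuinely different route from the paper's. The paper argues geometrically inside $\KK\x\EE$: since $T(N(V))\x\{0\}=L(N(V))$ is closed, positive (Lemma \ref{lema_tnv_no_degenerado}) and $\K{\cdot}{\cdot}_\rho$-orthogonal to $R(L)^\circ$, it is enlarged to a closed complement $\M$ of $R(L)^\circ$ in $R(L)$; any such complement is regular, hence uniformly positive by Proposition \ref{preeliminares regular}, and uniform positivity is inherited by the closed subspace $T(N(V))\x\{0\}$. You instead work in $\HH$ with $A=L^\#L$: you prove that $N(V)+N(A)$ is closed (via the identity $A(N(V))=T^\#T(N(V))$, the lower bound for $T^\#$ coming from the surjectivity of $T$, and the closedness of $R(A)$ supplied by Proposition \ref{preeliminares pseudo regular}), deduce that $L(N(V))+R(L)^\circ$ is closed, and then extract uniform positivity from the positive gap between these two subspaces, measured against an \emph{arbitrary} regular complement of $R(L)^\circ$. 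Your argument is longer and uses more operator-theoretic machinery, but it buys something real: it makes explicit the closedness of $R(L)^\circ + L(N(V))$, which is exactly what the paper's step ``there exists a closed subspace $\M$ with $R(L)=R(L)^\circ\sdo\M$ and $T(N(V))\x\{0\}\subseteq\M$'' quietly presupposes. One justification should be patched: the image of a closed subspace under a closed-range operator is \emph{not} closed in general, so ``$L(N(V)+N(A))$ is the image of a closed subspace, hence closed'' needs Proposition \ref{R(AB)} together with the observation that $N(L)=N(T)\cap N(V)\subseteq N(V)+N(A)$; with that line added, everything checks out.
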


\begin{proof}
The idea of the proof is to show that $T(N(V))\x\{0\}$ is contained in a regular complement of $R(L)^\circ$ in $R(L)$. First, observe
that 
$R(L)^\circ$ is a neutral subspace and that, by Lemma \ref{lema_tnv_no_degenerado}, $T(N(V))\x\{0\}$ is a closed positive subspace. Hence, $R(L)^\circ\cap (T(N(V))\x\{0\})=\{0\}$.
Also, $T(N(V))\x\{0\}=L(N(V))$ is $\K{\cdot}{\cdot}_\rho$-orthogonal to $R(L)^\circ$.


Therefore, there exists a closed subspace $\M$ of $R(L)$ such that 
\[
R(L)=R(L)^\circ \sdo \ \M \ \ \ \text{and} \ \ \ T(N(V))\x\{0\}\subseteq \M.
\]
Since $R(L)$ is pseudo-regular and nonnegative, the subspace $\M$ is a uniformly positive subspace of $\KK\x\EE$ and $T(N(V))\x\{0\}$ has the same property (by transitivity). Thus, $T(N(V))$ is uniformly positive.
\end{proof}

Combining \cite[Prop. 3.8]{GMMP10} with the above result,
we can now ensure the existence of $(T,V)$-interpolating splines to every element of $\EE$ in case that $R(L)$ is a (nonnegative) pseudo-regular subspace.

\begin{cor} If $R(L)$ is a nonnegative pseudo-regular subspace of $(\KK\x\EE,\K{\cdot}{\cdot}_\rho)$, then 
\[
sp(z_0)\neq\varnothing \quad \text{for every $z_0\in\EE$}.
\]
\end{cor}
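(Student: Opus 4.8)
The plan is to obtain the conclusion by chaining Proposition \ref{lema relacion entre R(L) y T(N(V))} with the characterization recorded above from \cite[Prop. 3.8]{GMMP10}. First I would observe that the hypothesis of the corollary is precisely the hypothesis of Proposition \ref{lema relacion entre R(L) y T(N(V))}: that $R(L)$ is a nonnegative pseudo-regular subspace of $(\KK\x\EE,\K{\cdot}{\cdot}_\rho)$. Applying that proposition directly yields that $T(N(V))$ is a (closed) uniformly positive subspace of $\KK$.

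Next I would invoke \cite[Prop. 3.8]{GMMP10}, quoted earlier, which asserts that $sp(z_0)\neq\varnothing$ for every $z_0\in\EE$ if and only if $T(N(V))$ is a closed uniformly positive subspace of $\KK$. Since the previous step has established exactly this condition, the desired conclusion follows immediately.

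I do not anticipate any genuine obstacle: all the analytic content has already been absorbed into Proposition \ref{lema relacion entre R(L) y T(N(V))}, and this corollary is merely a repackaging of it in the language of interpolating splines. If anything deserves emphasis, it is why both hypotheses on $R(L)$ are needed: Lemma \ref{lema_tnv_no_degenerado} shows that nonnegativity alone delivers only a positive (possibly not uniformly positive) $T(N(V))$, and it is the pseudo-regularity that promotes positivity to uniform positivity---exactly the property that \cite[Prop. 3.8]{GMMP10} requires. Thus the only step to get right is checking that the quoted form of the \cite{GMMP10} criterion matches the output of Proposition \ref{lema relacion entre R(L) y T(N(V))}, which it does verbatim.
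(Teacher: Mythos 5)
Your proposal is correct and follows exactly the paper's route: the corollary is stated immediately after Proposition \ref{lema relacion entre R(L) y T(N(V))} with the remark that it follows by ``combining \cite[Prop. 3.8]{GMMP10} with the above result,'' which is precisely your chain of deductions. Your added remark on why pseudo-regularity (and not mere nonnegativity) is needed to upgrade positivity to uniform positivity is accurate and consistent with Lemma \ref{lema_tnv_no_degenerado}.
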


\medskip

We now focus our attention on the situations in which a set of interpolating splines is also a subset of the solutions to a certain smoothing interpolation problem, or, even further,
in which an interpolating splines problem can be posed as a smoothing problem, and viceversa. 
We start by presenting what is known so far, see \cite[Thm. 4.7]{GMMP10}.

\begin{prop} Suppose that $R(L)$ is a positive regular subspace of $(\KK\x\EE,\K{\cdot}{\cdot}_\rho)$.
Then, for every $z_0\in \EE$ there exists $w_0\in\EE$ such that
\[
sp(w_0)=sm(\rho,z_0).
\]
\end{prop}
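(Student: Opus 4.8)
The plan is to exploit the fact that, under the hypothesis, both $sm(\rho,z_0)$ and $sp(w_0)$ are affine manifolds parallel to the \emph{same} subspace $N(T)\cap N(V)$, so that proving the equality $sp(w_0)=sm(\rho,z_0)$ reduces to exhibiting a single vector lying in both. First I would record the structural facts granted by the assumption that $R(L)$ is positive regular. Since $R(L)$ is nonnegative we have $T^\#T+\rho V^\#V\in\mathcal{L}(\HH)^+$ (Proposition \ref{prop cono positivo}), and the earlier proposition characterizing the regularity of $R(L)$ then yields $sm(\rho,z_0)\neq\varnothing$ for every $z_0\in\EE$, together with $sm(\rho,z_0)=\widetilde{x}+N(T)\cap N(V)$ for any particular solution $\widetilde{x}$. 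On the other side, Proposition \ref{lema relacion entre R(L) y T(N(V))} ensures that $T(N(V))$ is (closed) uniformly positive, hence nondegenerate, so Lemma \ref{props_tnv}(2) gives $\N_0=N(V)\cap N(T)$; combined with Proposition \ref{prop sp no vacio}, this shows that whenever $sp(w_0)\neq\varnothing$ it is also an affine manifold parallel to $N(T)\cap N(V)$.

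With these preliminaries in place, I would fix a particular smoothing spline $\widetilde{x}\in sm(\rho,z_0)$ and simply set $w_0:=V\widetilde{x}$. The goal then becomes to verify that $\widetilde{x}\in sp(w_0)$. By Remark \ref{obs_sol_sp} this amounts to checking two conditions: that $V\widetilde{x}=w_0$, which holds by the very definition of $w_0$, and that $\widetilde{x}\in T^\#T(N(V))^\bot$.

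The heart of the argument, and the step I expect to carry the real content, is establishing this last membership. I would start from the normal equation of Corollary \ref{ec normal}, namely $(T^\#T+\rho V^\#V)\widetilde{x}=\rho V^\#z_0$, and rewrite it as $T^\#T\widetilde{x}=\rho V^\#(z_0-V\widetilde{x})$. Then, for an arbitrary $n\in N(V)$, using that $T^\#T$ is selfadjoint with respect to $\PI{\cdot}{\cdot}$ and the defining property of $V^\#$, I would compute
\[
\PI{\widetilde{x}}{T^\#Tn}=\PI{T^\#T\widetilde{x}}{n}=\rho\,\K{z_0-V\widetilde{x}}{Vn}_\EE=0,
\]
where the last equality is forced by $Vn=0$. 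Hence $\widetilde{x}\perp T^\#T(N(V))$, that is $\widetilde{x}\in T^\#T(N(V))^\bot$, and by Remark \ref{obs_sol_sp} we conclude $\widetilde{x}\in sp(w_0)$; in particular $sp(w_0)\neq\varnothing$.

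Finally, since $sm(\rho,z_0)$ and $sp(w_0)$ are both affine manifolds parallel to $N(T)\cap N(V)$ and they share the point $\widetilde{x}$, they must coincide, which is the desired conclusion. The only genuine subtlety beyond the short computation above lies in the bookkeeping that identifies the two directing subspaces: one must invoke the positivity of $R(L)$ (through Lemma \ref{lema_tnv_no_degenerado} and Proposition \ref{lema relacion entre R(L) y T(N(V))}) to guarantee that $T(N(V))$ is nondegenerate, so that $\N_0$ collapses to $N(T)\cap N(V)$, and the regularity of $R(L)$ to guarantee that $N(T^\#T+\rho V^\#V)$ equals $N(T)\cap N(V)$. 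Once these identifications are secured, the equality of the two solution sets is immediate.
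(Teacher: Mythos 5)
Your proof is correct and takes essentially the same route as the paper: the paper states this proposition without proof (citing \cite{GMMP10}), but your argument is exactly the one it deploys for Proposition \ref{prop_inclusion}(1) --- the normal equation gives $T^\#T\widetilde{x}=\rho V^\#(z_0-V\widetilde{x})\in N(V)^\bot$, hence $\widetilde{x}\in T^\#T(N(V))^\bot$ and $\widetilde{x}\in sp(V\widetilde{x})$ --- combined with the identification $N(T^\#T+\rho V^\#V)=\N_0=N(T)\cap N(V)$ from nondegeneracy that the paper uses in its final Corollary, with regularity supplying $sm(\rho,z_0)\neq\varnothing$ for every $z_0$. I see no gaps.
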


Although this result is interesting (it says that every $(T,V)$-interpolating spline is a $(T,V,\rho)$-smoothing spline and viceversa), the hypothesis under consideration is quite strong. The rest of the manuscript is
devoted to some intermediate results by relaxing the conditions imposed on $R(L)$.

\begin{prop}\label{prop_inclusion} Suppose that $R(L)$ is a nonnegative subspace of $(\KK\x\EE,\K{\cdot}{\cdot}_\rho)$. Then, the following conditions hold:
\begin{enumerate}
\item for every $z_0\in\EE$ such that $sm(\rho,z_0)\neq\varnothing$ there exists $w_0\in\EE$ such that
\[
\varnothing\neq sp(w_0)\subseteq sm(\rho,z_0);
\]
\item for every $w_0\in\EE$ such that $sp(w_0)\neq\varnothing$ there exists $z_0\in\EE$ such that
\[
sp(w_0)\subseteq sm(\rho,z_0).
\]
\end{enumerate}
\end{prop}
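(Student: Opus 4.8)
The plan is to build a single two–way dictionary between the two normal descriptions and then upgrade it from a statement about individual splines to the desired set inclusions. First I would record the consequences of the standing hypothesis that $R(L)$ is nonnegative: by Proposition \ref{prop cono positivo} (equivalently Corollary \ref{cor_rhos}) this amounts to $T^\#T+\rho V^\#V\in\mc{L}(\HH)^+$, and by Lemma \ref{lema_tnv_no_degenerado} the subspace $T(N(V))$ is positive, hence nondegenerate. Lemma \ref{props_tnv}(2) then gives $\N_0=N(V)\cap N(T)$, so in particular $\N_0\subseteq N(T^\#T+\rho V^\#V)=N(L^\#L)$. This inclusion is exactly what will let me pass from ``a spline lies in the other solution set'' to ``the whole affine manifold does'', since $sp(w_0)$ is parallel to $\N_0$ (Proposition \ref{prop sp no vacio}) while $sm(\rho,z_0)$ is parallel to $N(L^\#L)$ (Corollary \ref{ec normal}).

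The conceptual core is the following observation, which I would isolate first. A vector $x$ solves the smoothing normal equation $(T^\#T+\rho V^\#V)x=\rho V^\#z_0$ if and only if $T^\#Tx=\rho V^\#(z_0-Vx)$; since the right-hand side lies in $R(V^\#)\subseteq N(V)^\bot$, and $\PI{x}{T^\#Tm}_\HH=\K{Tx}{Tm}_\KK=\PI{T^\#Tx}{m}_\HH$ for $m\in N(V)$, every such $x$ automatically satisfies $x\in T^\#T(N(V))^\bot$. By Remark \ref{obs_sol_sp}, the pair of conditions $Vx=w$ and $x\in T^\#T(N(V))^\bot$ is precisely what makes $x$ a $(T,V)$-interpolating spline to $w$. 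Conversely, if $x_0\in T^\#T(N(V))^\bot$ then $T^\#Tx_0\in N(V)^\bot=R(V^\#)$ (here I use that $V$ is surjective, so $R(V^\#)=N(V)^\bot$ and $V^\#$ is injective), hence one can solve $\rho V^\#v=T^\#Tx_0$ for a unique $v\in\EE$.

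With these pieces the two assertions become short. For (1) I would take any $\tilde x\in sm(\rho,z_0)$; the observation gives $\tilde x\in T^\#T(N(V))^\bot$, so setting $w_0:=V\tilde x$ yields $\tilde x\in sp(w_0)$ by Remark \ref{obs_sol_sp}, whence $sp(w_0)\neq\varnothing$ and $sp(w_0)=\tilde x+\N_0\subseteq\tilde x+N(L^\#L)=sm(\rho,z_0)$. For (2) I would take $x_0\in sp(w_0)$, so $Vx_0=w_0$ and $x_0\in T^\#T(N(V))^\bot$; solving $\rho V^\#v=T^\#Tx_0$ and setting $z_0:=w_0+v$ makes $x_0$ a solution of the smoothing normal equation, so $sm(\rho,z_0)\neq\varnothing$ by Proposition \ref{cond nec z0} and $x_0\in sm(\rho,z_0)$ by Corollary \ref{ec normal}; the inclusion $sp(w_0)\subseteq sm(\rho,z_0)$ then follows once more from $\N_0\subseteq N(L^\#L)$.

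The main obstacle is not any single computation but obtaining the inclusions at the level of \emph{sets} rather than points: a priori one only controls one spline at a time, and it is the nondegeneracy of $T(N(V))$ --- equivalently $\N_0=N(V)\cap N(T)\subseteq N(L^\#L)$ --- that forces the smaller affine manifold $sp(w_0)$ inside the larger $sm(\rho,z_0)$. The other point deserving careful statement is the automatic membership $x\in T^\#T(N(V))^\bot$ for solutions of the smoothing equation; once that is in hand, both directions reduce to the solvability of $\rho V^\#v=T^\#Tx_0$, which is exactly where surjectivity of $V$ enters.
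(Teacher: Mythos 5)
Your proposal is correct and follows essentially the same route as the paper: solutions of the normal equation $(T^\#T+\rho V^\#V)x=\rho V^\#z_0$ automatically lie in $T^\#T(N(V))^\bot$ because $T^\#Tx\in R(V^\#)\subseteq N(V)^\bot$, Remark \ref{obs_sol_sp} converts this into membership in $sp(Vx)$, and the set-level inclusions follow from $\N_0=N(V)\cap N(T)\subseteq N(L^\#L)$ via Lemma \ref{lema_tnv_no_degenerado} and Lemma \ref{props_tnv}(2). The only cosmetic difference is that you solve $\rho V^\#v=T^\#Tx_0$ abstractly where the paper writes the same vector via the Moore--Penrose inverse $(V^\#)^\dag$.
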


\begin{proof}
(1)\ \  Assume that $x_0\in sm(\rho,z_0)$. By Proposition \ref{ec normal} it follows that $(T^\#T+\rho V^\#V)x_0=\rho V^\#z_0$. Since $T^\#Tx_0=V^\#(\rho z_0-Vx_0)\in N(V)^\bot$,
it holds that $x_0\in T^\#T(N(V))^\bot$.

Also, Lemma \ref{lema_tnv_no_degenerado} implies that $T(N(V))$ is a positive and nondegenerate subspace of $\KK$. But then $N(V)\cap T^\#T(N(V))^\bot=N(V)\cap N(T)$, see Lemma
\ref{props_tnv}.
Setting $w_0:=Vx_0$, by Remark \ref{obs_sol_sp} it follows that $x_0\in sp(w_0)$, and thus
\[
sp(w_0)=x_0+N(V)\cap N(T).
\]
Finally, 
$sp(w_0)=x_0+N(V)\cap N(T)\subseteq x_0+N(L^\#L)=sm(\rho,z_0)$.

\smallskip

\noi (2)\ \ Note that $T(N(V))$ is a nonnegative subspace of $\KK$ because $R(L)$ is a nonnegative subspace of $(\KK\x\EE,\K{}{}_\rho)$. Indeed,
$L(N(V))=T(N(V))\x\set{0}$. 

Assume that $sp(w_0)\neq\varnothing$ for some $w_0\in\EE$.
By Remark \ref{obs_sol_sp}, there exists $x_0\in T^\#T(N(V))^\bot$ such that $Vx_0=w_0$. Since $T^\#Tx_0\in N(V)^\bot=R(V^\#)$, it follows that
$T^\#Tx_0=V^\#(V^\#)^\dag T^\#T x_0$. Then,
\[
(T^\#T+\rho V^\#V)x_0=\rho V^\#\parentesis{\frac{1}{\rho}(V^\#)^\dag T^\#T+V}x_0.
\]
Thus, denoting $z_0:=\parentesis{\frac{1}{\rho}(V^\#)^\dag T^\#T+V}x_0$, Corollary \ref{ec normal} assures that $x_0\in sm(\rho,z_0)$. Since $N(V)\cap N(T)\subseteq N(L^\#L)$, the result follows.
\end{proof}

Note that given $z_0\in\EE$ such that $sm(\rho,z_0)\neq\varnothing$, by \eqref{solucion_daga}, setting $w_0:=\rho V(T^\#T+\rho V^\#V)^\dag V^\#z_0$ yields $\varnothing\neq sp(w_0)\subseteq sm(\rho,z_0)$.
Analogously, given
$w_0\in \EE$ such that $sp(w_0)\neq\varnothing$, if $x_0\in T^\#T(N(V))^\bot$ is such that $w_0=Vx_0$, then setting $z_0:=\parentesis{\frac{1}{\rho}(V^\#)^\dag T^\#T+V}x_0$ yields
$sp(w_0)\subseteq sm(\rho,z_0)$.

\medskip

As an immediate consequence of Proposition \ref{prop_inclusion}, the following corollary holds:

\begin{cor} Suppose that $R(L)$ is a positive nondegenerate subspace of $(\KK\x\EE,\K{\cdot}{\cdot}_\rho)$. Then, the following conditions hold:
\begin{enumerate}
\item for every $z_0\in\EE$ such that $sm(\rho,z_0)\neq\varnothing$ there exists $w_0\in\EE$ such that
\[
\varnothing\neq sp(w_0)=sm(\rho,z_0);
\]
\item for every $w_0\in\EE$ such that $sp(w_0)\neq\varnothing$ there exists $z_0\in\EE$ such that
\[
sp(w_0)=sm(\rho,z_0).
\]
\end{enumerate}
\end{cor}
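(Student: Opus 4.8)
The plan is to obtain the corollary directly from Proposition \ref{prop_inclusion}, upgrading its two inclusions to equalities. Since every nonzero vector of a positive subspace is positive, a positive subspace has trivial isotropic part; thus the hypothesis guarantees that $R(L)$ is in particular both nonnegative and nondegenerate. Nonnegativity is exactly what Proposition \ref{prop_inclusion} requires, so for each admissible $z_0$ (resp. $w_0$) it already supplies a $w_0$ (resp. $z_0$) with $\varnothing\neq sp(w_0)\subseteq sm(\rho,z_0)$. Both sets are nonempty affine manifolds: by Corollary \ref{ec normal}, $sm(\rho,z_0)$ is parallel to $N(T^\#T+\rho V^\#V)=N(L^\#L)$, while by Proposition \ref{prop sp no vacio}, $sp(w_0)$ is parallel to $\N_0=N(V)\cap T^\#T(N(V))^\bot$. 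Hence it suffices to prove that these two subspaces coincide: once the manifolds share a common direction, a nonempty inclusion forces equality, since choosing any common point $x_0$ makes both manifolds equal to $x_0$ plus that subspace.

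The heart of the argument is therefore the identity $\N_0=N(L^\#L)=N(T)\cap N(V)$, and this is the step I expect to carry the content. For the right-hand equality I would use nondegeneracy of $R(L)$: by \eqref{parte isotropica} one has $R(L)^\circ=L(N(L^\#L))$, so $R(L)^\circ=\{0\}$ yields $L(N(L^\#L))=\{0\}$, i.e.\ $N(L^\#L)\subseteq N(L)=N(T)\cap N(V)$; the reverse inclusion is automatic, giving $N(L^\#L)=N(T)\cap N(V)$. For the left-hand equality I would transfer positivity from $R(L)$ to $T(N(V))$: since $L(N(V))=T(N(V))\x\{0\}$ is a subspace of $R(L)$ and $\K{(y,0)}{(y,0)}_\rho=\K{y}{y}_\KK$, positivity of $R(L)$ forces $T(N(V))$ to be positive, hence nondegenerate. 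Then Lemma \ref{props_tnv}(2) gives $\N_0=N(V)\cap N(T)$. Combining the two computations yields $\N_0=N(L^\#L)$, as required.

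With $\N_0=N(L^\#L)$ established, I would conclude each part verbatim. In (1), Proposition \ref{prop_inclusion}(1) produces $w_0$ with $\varnothing\neq sp(w_0)\subseteq sm(\rho,z_0)$; as both manifolds are cosets of the single subspace $N(T)\cap N(V)$, the inclusion between nonempty manifolds of equal direction is an equality, so $\varnothing\neq sp(w_0)=sm(\rho,z_0)$. Part (2) is identical, invoking Proposition \ref{prop_inclusion}(2). The only genuinely non-bookkeeping step is the collapse of the two nullspaces to $N(T)\cap N(V)$, so I expect the main (and only modest) obstacle to be the correct use of \eqref{parte isotropica} together with the transfer of positivity through $L(N(V))=T(N(V))\x\{0\}$; the remainder is the standard fact that nested nonempty affine manifolds with the same direction coincide.
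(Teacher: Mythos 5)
Your argument is correct and follows essentially the same route as the paper: both hinge on using \eqref{parte isotropica} together with nondegeneracy of $R(L)$ to get $N(L^\#L)=N(T)\cap N(V)$, and then upgrading the inclusions of Proposition \ref{prop_inclusion} to equalities because both sets are cosets of that same subspace. Your write-up merely makes explicit (via the positivity of $T(N(V))$ and Lemma \ref{props_tnv}(2)) the identification $\N_0=N(T)\cap N(V)$ that the paper leaves implicit in its appeal to Proposition \ref{prop_inclusion}.
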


\begin{proof}
By \eqref{parte isotropica}, $R(L)^\circ=L(N(L^\#L))$. Hence, $R(L)$ is nondegenerate if and only $N(L^\#L)\subseteq N(L)=N(T)\cap N(V)$. In this case, $N(L^\#L)=N(T)\cap N(V)$, and the
statement then follows from Proposition \ref{prop_inclusion}.
\end{proof}

\bigskip

\end{document}